\newcommand{\bea}{\begin{eqnarray}}
\newcommand{\eea}{\end{eqnarray}}
\newcommand{\be}{\begin {equation}}
\newcommand{\ee}{\end{equation}}
\newtheorem{theorem}{Theorem}[section]
\newtheorem{corollary}[theorem]{Corollary}
\newtheorem{lemma}[theorem]{Lemma}
\newtheorem{remark}[theorem]{Remark}
\newtheorem{claim}[theorem]{Claim}
\begin{document}

\title{quadratic Lagrange spectrum: I}

\author {Xianzu Lin }

\date{ }
\maketitle
   {\small \it College of Mathematics and Computer Science, Fujian Normal University, }\\
    \   {\small \it Fuzhou, {\rm 350108}, China;}\\
      \              {\small \it Email: linxianzu@126.com}
\begin{abstract}
In this paper we prove the existence of Hall's ray for the
quadratic Lagrange spectrums of all real quadratic numbers. For a
large class of real quadratic numbers, we compute the Hurwitz
constants of their quadratic Lagrange spectrums.

\end{abstract}



Keywords: continued fractions, Lagrange spectrums, Hall's ray,
Hurwitz constant.


Mathematics Subject Classification 2010: 11J06, 11J70.

\section{Introduction}
By the classical Dirichlet theorem, for any irrational number
$\alpha$, there exist infinitely many integers $p,q$ such that
\begin{equation}\label{e90x} |\alpha-\tfrac{p}{q}|<\tfrac{1}{q^2}.\end{equation}
If we want to strengthen this ineqality by replacing $1$ with a
smaller constant, we need to consider
$$L(\alpha):=\liminf_{q\rightarrow\infty}q|q\alpha-p|.$$
The classical Lagrange spectrum is defined to be the set
$\emph{L}$ of values $L(\alpha)$ where $\alpha$ runs over all
irrational numbers. In 1879, Markoff \cite{m,m1} proved that
$\emph{L}$ begins with a discrete sequence: $\tfrac{1}{\sqrt{5}},$
$\tfrac{1}{\sqrt{8}}$, $\tfrac{\sqrt{221}}{5},\cdots$ which
converges to $\tfrac{1}{3}$, where $\tfrac{1}{\sqrt{5}}$ is the
Hurwitz constant. In 1947, Hall \cite{h} showed  that $\emph{L}$
contains a nontrivial interval $[0,\epsilon] (Hall's\  ray)$.  In
1975, Cusick \cite{c} proved that $\emph{L}$  is a closed subset
of $\mathbb{R}$. We refer the readers to \cite{cf} and the
references therein for more properties of Lagrange spectrum.

In \cite{pp,pp1}, as a corollary of their geometric
generalizations of the Lagrange Spectra in negative curvature,
Parkkonen and Paulin  defined the quadratic Lagrange spectrum as
follows

Throughout this paper, let $^{\sigma}$ be the Galois conjugate of
a quadratic number $x$. Let $\alpha$ be a fixed real quadratic
 number, and let
$\Theta_{\alpha}=PSL(2,\mathbb{Z})\{\alpha, \alpha^{\sigma}\}$ be
the orbit of $\alpha$ and $\alpha^{\sigma}$ for the action of
$PSL(2,\mathbb{Z})$. The quadratic Lagrange spectrum of $\alpha$
is defined to be the set $Sp_{\alpha}$ of values
$$c_{\alpha}\xi:=\liminf_{\beta\in\Theta_{\alpha}  \atop |\beta-\beta^{\sigma}|\rightarrow0}2\tfrac{|\xi-\beta|}{|\beta-\beta^{\sigma}|}$$ where $\xi$
runs over all real number not in $\mathbb{Q}\cup\Theta_{\alpha}$.

Parkkonen and Paulin  \cite{pp,pp1} showed that $Sp_{\alpha}$ is a
closed subset of $\mathbb{R}$. But their geometric method does not
imply the existence of Hall's ray for $Sp_{\alpha}$. The first
main result of this paper is existence of Hall's ray for
$Sp_{\alpha}$:
 \begin{theorem} \label{main1}
For any real quadratic number $\alpha$, there exists a positive
number  $\psi=\psi(\alpha)$ (see Section 3 for the definition of
$\psi(\alpha)$) such that $[0,\psi]\subset Sp_{\alpha}$.
\end{theorem}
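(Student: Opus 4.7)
The plan is to adapt Hall's classical sum-of-Cantor-sets technique from the ordinary Lagrange spectrum to this quadratic setting, via a continued-fraction reformulation of $c_\alpha(\xi)$. Every $\beta\in\Theta_\alpha$ has an eventually periodic continued-fraction expansion whose tail coincides with (a conjugate of) the period of $\alpha$, and $|\beta-\beta^\sigma|$ is small precisely when $\beta$ is close to being purely periodic, i.e. the $\alpha$-periodic block starts very early. If $\xi\notin\mathbb{Q}\cup\Theta_\alpha$ is close to such a $\beta$, then the continued fractions of $\xi=[a_0;a_1,a_2,\ldots]$ and $\beta$ agree through some index $n$ and split at $n+1$. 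Standard convergent estimates then express both $|\xi-\beta|$ and $|\beta-\beta^\sigma|$ in closed form in terms of the right tail $[a_{n+1};a_{n+2},\ldots]$ of $\xi$, the reverse left tail $[0;a_n,a_{n-1},\ldots,a_1]$ of $\xi$, and the fixed data of $\alpha$'s period. Dividing and letting the matching block grow yields a formula of the form
\[
c_\alpha(\xi)=\liminf_{n\in N(\xi,\alpha)} \Phi_n(\xi;\alpha),
\]
where $N(\xi,\alpha)$ is the set of indices $n$ after which the partial quotients of $\xi$ begin with an increasingly long prefix of $\alpha$'s period, and $\Phi_n$ is an explicit function of the two continued-fraction tails at $n$ multiplied by a prefactor depending only on $\alpha$.

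With this formula in hand, Hall's Cantor-set lemma applies. Fix $M$ large enough that the set $E_M$ of numbers all of whose partial quotients lie in $\{1,\dots,M\}$ satisfies $E_M+E_M\supseteq[s,t]$ for an explicit interval $[s,t]$. For a prescribed target $v\in[0,\psi]$, I would build $\xi$ blockwise: at a sparse sequence of positions $n_k\to\infty$ I insert a copy of $\alpha$'s period of length $\ell_k\to\infty$, and I choose the partial quotients on either side from $\{1,\dots,M\}$ so that the resulting pair of tails of $\xi$ at $n_k$ sums to the value prescribed by Hall's lemma that makes $\Phi_{n_k}(\xi;\alpha)$ within $1/k$ of $v$. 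Between these distinguished positions I pad with partial quotients from $\{1,\dots,M\}$ so that every index $n\notin\{n_k\}$ either fails to belong to $N(\xi,\alpha)$ or yields $\Phi_n(\xi;\alpha)\geq v$. This forces $c_\alpha(\xi)=v$ and hence $v\in Sp_\alpha$.

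The main obstacle is controlling the error in the reduction of $2|\xi-\beta|/|\beta-\beta^\sigma|$ to $\Phi_n(\xi;\alpha)$: this error depends both on how long the $\alpha$-block inserted at $n_k$ is and on the unbounded freedom in the padding partial quotients, so one must show it tends to $0$ uniformly over all admissible paddings. This is what pins down the admissible range $[0,\psi(\alpha)]$: the prefactor in $\Phi_n$ coming from $\alpha$ determines the natural scale, while the requirement that the inserted $\alpha$-blocks be compatible with the Hall Cantor set $E_M$ forces $M$ (and hence $\psi(\alpha)$) to depend on the largest partial quotient appearing in $\alpha$'s period. Verifying this uniform error estimate and extracting the explicit value of $\psi(\alpha)$ is the technical heart of the argument; once it is established, the preceding construction yields every $v\in[0,\psi(\alpha)]$ as some $c_\alpha(\xi)$.
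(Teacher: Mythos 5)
Your first step---reformulating $c_\alpha\xi$ through continued fractions so that the quantity to be controlled becomes an explicit function of the forward tail $D_r=[a_{r+1},a_{r+2},\dots]$ and the backward tail $d_r=[a_r,\dots,a_1]$ at positions where a long block of $\alpha$'s period begins---is exactly what the paper does, via its formula
$$\frac{2|\xi-\xi_{r,\phi}|}{|\xi_{r,\phi}-\xi_{r,\phi}^{\sigma}|}=\frac{2|D_{r}-\phi||\tfrac{1}{d_{r}}+\phi^{\sigma}|}{|\phi-\phi^{\sigma}||D_{r}+\tfrac{1}{d_{r}}|}.$$
But the mechanism you propose for hitting a prescribed target $v$---Hall's theorem that $E_M+E_M$ contains an interval, applied so that ``the pair of tails sums to the prescribed value''---does not match this functional. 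In the classical Lagrange spectrum the relevant quantity is literally $D_r+1/d_r$, a sum of the two tails, which is why $E_M+E_M\supseteq[s,t]$ is the right tool there. Here the expression is a quotient of products; for small targets it is essentially $|D_r-\alpha|$ multiplied by a bounded function of $d_r$, and no additive decomposition is available. So Hall's lemma as you invoke it does not produce the required pair of tails, and this is not a removable technicality: realizing an arbitrary $\epsilon\in(0,\psi]$ as a value of this two-variable function with \emph{both} tails having admissible continued-fraction expansions is the actual crux of the theorem.

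The paper's substitute is to solve the level-set equation directly: writing the backward tail as $B+1/x$ and the forward tail as $y$, the equation $\Phi=\epsilon$ defines a monotone map $y=f_{\epsilon}(x)$ with two-sided derivative bounds (Lemma 3.1), and Lemma 3.2 produces, by a successive-approximation argument alternating between the $x$-side and the $y$-side, a solution pair in which $x$ has all partial quotients in $\{B,B+1\}$ and $y$ begins with many copies of the period of $\alpha$ and then continues with digits avoiding long subblocks of that period. That avoidance condition is also what lets the paper verify, through the three-case analysis at the end of Section 3 together with the shift-invariance of Lemma 2.4, that no position other than the designated ones can make the liminf drop below $\epsilon$---a point you assert but whose proof requires exactly this block-avoidance structure. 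To repair your argument you would need either a two-variable (product/quotient) analogue of Hall's theorem adapted to the functional above, or the paper's solve-and-iterate construction; as written, the proposal has a genuine gap at its central step.
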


Define the Hurwitz constant $K_{\alpha}$ of $\alpha$ to be the
maximum of $Sp_{\alpha}$, Parkkonen and Paulin  \cite{pp,pp1}
showed that $K_{\alpha}\leq (1+\sqrt{2})\sqrt{3}$ for each real
quadratic
 number $\alpha$.
 In \cite{bu}, Bugeaud pointed out that the theory of continued
 fractions is well suited for the investigation of quadratic Lagrange
spectrum. Among other results, he showed that $K_{\alpha}\leq
\tfrac{1}{2}$ and $K_{\varphi}=\tfrac{3}{\sqrt{5}}-1$, where
$\varphi$ is the Golden Ratio $(1+\sqrt{5})/2$. Bugeaud
conjectured that $\tfrac{3}{\sqrt{5}}-1$ is  a common upper bound
for all the  $K_{\alpha}$.  Pejkovi\'{c}\cite{pe}  proved this
conjecture using the theory of continued
 fractions. He also showed that $$\lim_{v\rightarrow\infty}K_{\beta_{u,v}}=\tfrac{3}{\sqrt{5}}-1-\tfrac{2}{u\sqrt{5}}$$
where $\beta_{u,v}=[\overline{u,v}]$ and $v\geq u>1/0.0008$.

In this paper, we determine the Hurwitz constant $K_{\beta_{u,v}}$
for $v\geq u\geq9$.

 \begin{theorem} \label{main3}
For $v\geq u\geq9$, the Hurwitz constant
$$K_{\beta_{u,v}}=\frac{2(\tau_1-\varphi^2)(1+\tfrac{\tau_2}{\varphi^2})}{(\varphi+\tfrac{1}{\varphi})(\tau_1
\tau_2+1)},$$ where $\tau_1=[\overline{u,v}]$ and
$\tau_2=[\overline{v,u}]$.
\end{theorem}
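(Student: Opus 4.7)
The plan is to use the continued-fraction description of the orbit $\Theta_{\beta_{u,v}}$, take $\xi=1/\varphi=[0;\overline{1}]$ as the extremal test point, and show that the liminf $c_{\beta_{u,v}}(1/\varphi)$ equals the claimed formula, after which one argues that no admissible $\xi$ does better. By Lagrange's theorem every $\beta\in\Theta_{\beta_{u,v}}$ has eventually periodic continued fraction with periodic tail a cyclic rotation of $(u,v)$, i.e.\ in $\{\tau_1,\tau_2\}$, and one has $\tau_1^\sigma=-1/\tau_2$ and $\tau_2^\sigma=-1/\tau_1$. If $\xi$ and $\beta$ share the continued-fraction prefix $(c_0,\ldots,c_k)$ with convergents $P_i/Q_i$ and $\eta$ denotes the tail of $\beta$ at level $k+1$, then writing $\beta=(P_k\eta+P_{k-1})/(Q_k\eta+Q_{k-1})$ and using $P_kQ_{k-1}-P_{k-1}Q_k=\pm 1$ one gets
$$
\beta-\beta^\sigma=\pm\frac{\eta-\eta^\sigma}{(Q_k\eta+Q_{k-1})(Q_k\eta^\sigma+Q_{k-1})}
$$
and an analogous expression for $\xi-\beta$, whose quotient is
$$
\frac{2|\xi-\beta|}{|\beta-\beta^\sigma|}=\frac{2\,|\xi_{k+1}-\eta|\,|\eta^\sigma+\rho_k|}{|\xi_{k+1}+\rho_k|\,|\eta-\eta^\sigma|},\qquad \rho_k:=Q_{k-1}/Q_k,
$$
where $\xi_{k+1}$ is the forward tail of $\xi$ and $\rho_k$ is determined by the reversed prefix.

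For $\xi=1/\varphi$ one has $\xi_{k+1}\to\varphi$ and $\rho_k\to 1/\varphi$, so $\xi_{k+1}+\rho_k\to\sqrt 5=\varphi+1/\varphi$. The key observation is that the minimising sequence of $\beta$'s is not the one with pure tail $\tau_1$ or $\tau_2$, but the one in which a single ``defect'' partial quotient $2$ is inserted between the matching $1$'s and a period starting with $v$:
$$
\beta_n=[0;1,1,\ldots,1,2,v,u,v,u,\ldots]\in\Theta_{\beta_{u,v}},
$$
with $n$ leading ones. Its tail at level $n+1$ is $\gamma:=2+1/\tau_2$ and $\gamma^\sigma=2+1/\tau_2^\sigma=2-\tau_1$. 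Using the two elementary identities
$$
2-\varphi=1/\varphi^2,\qquad 2+1/\varphi=\varphi^2,
$$
the three factors collapse to
$$
|\varphi-\gamma|=\frac{\tau_2+\varphi^2}{\varphi^2\tau_2},\quad |\gamma^\sigma+1/\varphi|=\tau_1-\varphi^2,\quad |\gamma-\gamma^\sigma|=\frac{\tau_1\tau_2+1}{\tau_2},
$$
so the ratio along $\beta_n$ tends to $\frac{2(\tau_1-\varphi^2)(\tau_2+\varphi^2)}{\varphi^2\sqrt 5\,(\tau_1\tau_2+1)}$, which is exactly the formula of Theorem \ref{main3}. The mirror choice $\gamma'=2+1/\tau_1$ (defect before a period starting with $u$) gives the same expression with $\tau_1\leftrightarrow\tau_2$; for $v\geq u$ it is strictly larger, so the ordering hypothesis serves precisely to decide which of the two defect tails realises the liminf.

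To complete the proof I would establish $c_{\beta_{u,v}}(\xi)\leq K_{\beta_{u,v}}$ for every $\xi\notin\mathbb{Q}\cup\Theta_{\beta_{u,v}}$, by producing, at infinitely many levels $k$, an approximating $\beta\in\Theta_{\beta_{u,v}}$ whose ratio is below the target. Depending on the partial quotient $c_{k+1}$ of $\xi$ one appends to $\beta$'s matching prefix either a pure tail $\tau_i$, or a defect tail of the form $c_{k+1}{\pm}1+1/\tau_i$, or the canonical $2+1/\tau_i$, and one chooses $i\in\{1,2\}$ (i.e.\ whether the period starts with $u$ or $v$) to minimise the resulting expression. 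The bound $u,v\geq 9$ is what makes this systematic: it forces $\tau_1,\tau_2\gg\varphi^2$, so the $(\varphi,1/\varphi,\sqrt 5)$-configuration at $\xi=1/\varphi$ dominates every competitor and the required inequalities become strict and elementary. The main technical obstacle is organising this case analysis: one must handle $\xi$'s whose tails are eventually $[\overline{1}]$ (the extremal case above), those with infinitely many partial quotients $\geq 2$, and especially the intricate intermediate patterns alternating between short blocks of $1$'s and larger digits, verifying in each case that a defect-insertion $\beta_n$ beats the target; the assumption $u,v\geq 9$ enters uniformly here to control the error terms.
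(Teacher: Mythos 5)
Your extremal configuration is exactly the one the paper uses: the test point is $\varphi$ (equivalently $1/\varphi$), the minimising approximants are those obtained by inserting a single partial quotient $2$ between the block of $1$'s and a periodic tail beginning with $v$, and your evaluation of the three factors $|\varphi-\gamma|$, $|\gamma^{\sigma}+1/\varphi|$, $|\gamma-\gamma^{\sigma}|$ correctly reproduces the claimed constant, so the plan and the computation of the candidate value are sound and coincide with the paper's.

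There are, however, two gaps, one of which you do not flag. First, your argument as written only shows $c_{\beta_{u,v}}(1/\varphi)\leq K_{\beta_{u,v}}$: you compare the defect tail $2+1/\tau_2$ against its mirror $2+1/\tau_1$, but to conclude that the liminf \emph{equals} the claimed value (and hence that this value lies in $Sp_{\beta_{u,v}}$ at all, rather than merely bounding it from above) you must rule out every other approximant of $1/\varphi$: the periodic tails attached directly after the $1$'s, the inserted digits near $v$ (resp.\ $u$), and arbitrary finite interludes $a_1,\dots,a_n$ with $n\geq 2$ preceding the periodic tail. This is where the paper's Lemma \ref{3ss376} does its real work: for $n=1$ the ratio is the absolute value of a quadratic form in the inserted integer whose zeroes are $\varphi-\tau^{-1}$ and $\tau'-\varphi^{-1}$, so only the integers $\{1,2,v-1,v\}$ (resp.\ $\{1,2,u-1,u\}$) need checking, and the case $n\geq2$ requires a separate estimate based on $p_i-\varphi q_i\geq 2-\varphi$. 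Without this step your proof yields only $K_{\beta_{u,v}}\leq$ the stated value. Second, the upper bound $c_{\beta_{u,v}}(\xi)\leq K_{\beta_{u,v}}$ for every admissible $\xi$ is left as an outline; the paper's actual organisation (either infinitely many indices with two consecutive partial quotients $>1$, or else comparing $M=\limsup$ of the isolated large partial quotients with $u-1$ and $v$, and choosing the inserted digit to be $[\tau']$, $1$, or $M+1$ accordingly) is where the hypothesis $v\geq u\geq 9$ is genuinely consumed, and your sketch would need to be fleshed out along essentially those lines. Also, a small imprecision: when $u=v$ the mirror defect gives the \emph{same} value, not a strictly larger one.
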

\begin{remark}\label{remar}
 In \cite{bu}, Bugeaud conjectured  the following formula for $K_{\alpha}$
 $$K_{\alpha}=\max\{c_{\alpha}\varphi, \lim_{m\rightarrow\infty} c_{\alpha}\varphi_m\},$$
 where $\varphi_m=[\overline{m}]$.
   We calculate both $c_{\alpha}\varphi$ and $\lim_{m\rightarrow\infty}
   c_{\alpha}\varphi_m$ for $\alpha=\beta_{u,v}$, and prove that the maximal of them is
   indeed $K_{\beta_{u,v}}$ when $v\geq u\geq9$.  In fact a more
   delicate method shows that Theorem \ref{main3} also holds when $v\geq u\geq4$.
\end{remark}
\begin{remark}\label{remar}
Our proof of  Theorem \ref{main3} also implies that
$K_{\beta_{u,v}}$ is an isolated point of $Sp_{\beta_{u,v}}$, when
$v\geq u\geq9$. It is interesting to see whether
$Sp_{\beta_{u,v}}$ begins with a discrete sequence, as in the case
of classical Lagrange spectrum.
\end{remark}

This paper is structured as follows: In Section 2, we give
preliminaries that will be used throughout this paper. In Section
3, we give proof  of Theorem \ref{main1}. In Section 4, we
determine the Hurwitz constant $K_{\beta_{u,v}}$ for $v\geq
u\geq9$. The author thanks professor Pejkovi\'{c} for his careful
reading of this paper, and numerous suggestions and corrections
for improvement.

 \section{preliminaries}
In this paper, we write $$[a_0,a_1,a_2,\cdots,a_n]$$ for the
finite continued fraction expansion
$$a_0+\frac{1}{a_1+\frac{1}{a_2+\cdots+\frac{1}{a_n}}},$$ and write $$[a_0,a_1,a_2,\cdots,a_n,\cdots]$$ for the
infinite continued fraction expansion
$$a_0+\frac{1}{a_1+\frac{1}{a_2+\cdots+\frac{1}{a_n+\cdots}}},$$
where $a_1,a_2,\cdots, $ are positive integers and $a_0$ is an
integer. An eventually periodic continued fraction is written as
$$[a_0,a_1,a_2,\cdots,a_{k-1},\overline{a_k,\cdots,a_{k+m-1}}],$$
where $a_0,a_1,a_2,\cdots,a_{k-1}$ is the preperiod and
$a_k,\cdots,a_{k+m-1}$ is the shortest period.

The sequence of convergents of
$$a=[a_0;a_1,a_2,\cdots,a_n,\cdots]$$ is defined by
$$p_{-2}=0, \   p_{-1}=1, \   p_{n}=a_np_{n-1}+p_{n-2}  \   (n\geq 0),$$
$$q_{-2}=1, \   q_{-1}=0, \   q_{n}=a_nq_{n-1}+q_{n-2}  \   (n\geq 0).$$

A direct proof  by induction shows that
\begin{equation}\label{e90}q_{m+n}\geq
2^{\tfrac{m-1}{2}}q_{n},\end{equation}
 for $m,n\geq 1.$

 \begin{lemma} (cf.\cite{hw})\label{ss}
$$[a_0;a_1,a_2,\cdots,a_n]=\frac{p_{n}}{q_{n}},$$
$$|a-\frac{p_{n}}{q_{n}}|<\frac{1}{q_{n}q_{n+1}}.$$

   \end{lemma}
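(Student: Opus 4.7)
The plan is to prove both assertions by induction on $n$ using the recurrence for $p_n, q_n$, with the classical ``determinant'' identity
\[
p_n q_{n-1} - p_{n-1} q_n = (-1)^{n-1} \qquad (n \geq -1)
\]
serving as the bridge between the two parts. This identity is itself an easy induction: the base case $p_{-1}q_{-2}-p_{-2}q_{-1}=1$ is direct from the initial values, and plugging $p_n = a_n p_{n-1} + p_{n-2}$, $q_n = a_n q_{n-1} + q_{n-2}$ into the left-hand side cancels the $a_n$-terms and flips the sign of the previous instance.

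For the first equality $[a_0;a_1,\ldots,a_n]=p_n/q_n$, I would induct on $n$, but formulate the induction slightly more generally so the last partial quotient is allowed to be an arbitrary positive real. The base cases $n=0,1$ are immediate from the definitions. For the inductive step, I would use the telescoping identity
\[
[a_0;a_1,\ldots,a_n,a_{n+1}] = [a_0;a_1,\ldots,a_{n-1},a_n+\tfrac{1}{a_{n+1}}],
\]
apply the strengthened inductive hypothesis to a length-$n$ continued fraction with real last entry $a_n+1/a_{n+1}$, and clear $1/a_{n+1}$:
\[
\frac{(a_n+1/a_{n+1})p_{n-1}+p_{n-2}}{(a_n+1/a_{n+1})q_{n-1}+q_{n-2}} = \frac{a_{n+1}p_n+p_{n-1}}{a_{n+1}q_n+q_{n-1}} = \frac{p_{n+1}}{q_{n+1}}.
\]

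For the inequality, write $a=[a_0;a_1,\ldots,a_n,\alpha_{n+1}]$ where $\alpha_{n+1}=[a_{n+1};a_{n+2},\ldots]$ is the $(n{+}1)$-st complete quotient. The formula just proved (in its strengthened form) gives $a=(\alpha_{n+1}p_n+p_{n-1})/(\alpha_{n+1}q_n+q_{n-1})$; subtracting $p_n/q_n$ and invoking the determinant identity yields
\[
a-\frac{p_n}{q_n} = \frac{p_{n-1}q_n - p_nq_{n-1}}{q_n(\alpha_{n+1}q_n+q_{n-1})} = \frac{(-1)^n}{q_n(\alpha_{n+1}q_n+q_{n-1})}.
\]
Since $\alpha_{n+1}>a_{n+1}$ (its fractional tail is strictly positive because $a$ is irrational, so the continued fraction does not terminate at step $n+1$), the denominator strictly exceeds $q_n(a_{n+1}q_n+q_{n-1})=q_nq_{n+1}$, which gives the bound. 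The entire argument is classical (Hardy--Wright), so there is no substantive ``obstacle'' beyond careful bookkeeping; the only subtlety is to allow real last partial quotients in the induction and to note that strict inequality requires $a$ irrational so that $\alpha_{n+1}$ strictly exceeds its integer part $a_{n+1}$.
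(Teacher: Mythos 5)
Your proof is correct and is exactly the classical argument from Hardy--Wright that the paper itself invokes by citation (the paper gives no proof of its own for this lemma). The determinant identity, the strengthened induction allowing a real last partial quotient, and the use of the complete quotient $\alpha_{n+1}>a_{n+1}$ for the strict inequality are all in order.
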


  \begin{lemma}\label{ss091}
Let $$\alpha=[a_0,a_1,a_2,\cdots],$$ and
$$\beta=[b_0,b_1,b_2,\cdots]$$ be  the continued fraction expansions of two
real numbers, and let $\{\frac{p_{n}}{q_{n}}\}_{n\geq0}$ the
sequence of convergents of $\alpha$. Let $n$ be a nonnegative
integer such that $a_i=b_i$ for $i=1,\cdots,n-1$, and $a_{n}\neq
b_{n}$. Then we have
$$\frac{1}{72q_{n}q_{n+2}}\leq\frac{1}{72q^2_{n}a_{n+1}a_{n+2}}\leq|\alpha-\beta|
 .$$

Moreover, if there exists a positive integer $C$ such that
$|a_i|\leq C$, then we
 have $$\frac{1}{72(C+1)^{2n+2}}\leq|\alpha-\beta|.$$
   \end{lemma}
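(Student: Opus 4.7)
The plan is to start from the standard expression
$$\alpha = \frac{\alpha_n p_{n-1} + p_{n-2}}{\alpha_n q_{n-1} + q_{n-2}}, \qquad \beta = \frac{\beta_n p_{n-1} + p_{n-2}}{\beta_n q_{n-1} + q_{n-2}},$$
where $\alpha_n = [a_n, a_{n+1}, \ldots]$ and $\beta_n = [b_n, b_{n+1}, \ldots]$; since $a_i = b_i$ for $i < n$, both fractions use the same convergents $p_{n-1}/q_{n-1}$ and $p_{n-2}/q_{n-2}$. Combined with the identity $p_{n-1} q_{n-2} - p_{n-2} q_{n-1} = (-1)^n$ this gives
$$|\alpha - \beta| = \frac{|\alpha_n - \beta_n|}{(\alpha_n q_{n-1} + q_{n-2})(\beta_n q_{n-1} + q_{n-2})}.$$
The factor coming from $\alpha$ is controlled immediately: from $\alpha_n < a_n + 1$ we obtain $\alpha_n q_{n-1} + q_{n-2} < q_n + q_{n-1} \leq 2 q_n$.

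The remaining work is a case split on $|a_n - b_n|$. If $|a_n - b_n| \geq 2$, then $|\alpha_n - \beta_n| \geq 1$, and rewriting $\beta_n q_{n-1} + q_{n-2} = (\beta_n - a_n) q_{n-1} + q_n$ shows this factor grows only linearly in $|b_n - a_n|$, producing $|\alpha - \beta| \geq 1/(8 q_n^2)$ after routine bookkeeping. The delicate case is $|a_n - b_n| = 1$. For $b_n = a_n + 1$, writing $\alpha_n = a_n + 1/\alpha_{n+1}$ and similarly for $\beta_n$ yields
$$\beta_n - \alpha_n = 1 - \frac{1}{\alpha_{n+1}} + \frac{1}{\beta_{n+1}} > 1 - \frac{1}{\alpha_{n+1}}.$$
When $a_{n+1} \geq 2$ this exceeds $1/2$; when $a_{n+1} = 1$, unfolding once more gives $1 - 1/\alpha_{n+1} = 1/(\alpha_{n+2}+1) > 1/(3 a_{n+2})$. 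Together with $\beta_n q_{n-1} + q_{n-2} \leq 3 q_n$, these estimates yield $|\alpha - \beta| \geq 1/(18 q_n^2 a_{n+1} a_{n+2})$, which is stronger than required. The mirror case $b_n = a_n - 1$ reduces similarly via $\alpha_n - \beta_n > 1/\alpha_{n+1} \geq 1/(a_{n+1}+1)$.

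For the \emph{moreover} clause, the recursion $q_n = a_n q_{n-1} + q_{n-2} \leq C q_{n-1} + q_{n-2}$ gives $q_n \leq (C+1)^n$ by induction, and $a_{n+1} a_{n+2} \leq C^2 \leq (C+1)^2$; substituting into the main bound produces $1/(72(C+1)^{2n+2})$. The principal obstacle is not conceptual but combinatorial: checking that the single constant $72$ survives every subcase of the split above, especially the borderline $|a_n - b_n| = 1$ with $a_{n+1} = 1$ and $a_{n+2}$ large, where $\alpha_n$ and $\beta_n$ nearly agree and one must descend to the next partial quotient to extract a usable lower bound on $|\alpha_n - \beta_n|$ — this is precisely what produces the factor $a_{n+1} a_{n+2}$ in the denominator of the stated bound.
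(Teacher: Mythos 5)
Your proof is correct. Note that the paper gives no argument here at all: it disposes of the main inequality by citing Bugeaud's Lemma 2.2 and its proof, and of the \emph{moreover} clause by the same observation $q_n\le(C+1)^n$ that you make. So your write-up is a self-contained substitute for that citation, and it follows what is essentially the standard (and presumably Bugeaud's) route: express $\alpha$ and $\beta$ through the common convergents, use the determinant identity to get $|\alpha-\beta|=|\alpha_n-\beta_n|/\bigl((\alpha_nq_{n-1}+q_{n-2})(\beta_nq_{n-1}+q_{n-2})\bigr)$, bound both denominators by small multiples of $q_n$, and split on $|a_n-b_n|$, descending to $a_{n+2}$ only in the borderline case $b_n=a_n+1$, $a_{n+1}=1$ --- which is indeed the only source of the factor $a_{n+1}a_{n+2}$. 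The numerics check out: you get $1/(8q_n^2)$ when $|a_n-b_n|\ge2$, about $1/(4q_n^2a_{n+1}a_{n+2})$ when $b_n=a_n-1$, and $1/(18q_n^2a_{n+1}a_{n+2})$ in the worst subcase, all stronger than the stated constant $72$; the comparison $\tfrac{1}{72q_nq_{n+2}}\le\tfrac{1}{72q_n^2a_{n+1}a_{n+2}}$ and the deduction of the final claim from $q_n\le(C+1)^n$ together with $a_{n+1}a_{n+2}\le(C+1)^2$ are also right. One cosmetic point: the lemma's hypothesis literally says $a_i=b_i$ for $i=1,\dots,n-1$, while your argument (correctly) assumes agreement from $i=0$ on so that the two numbers share the convergents $p_{n-1}/q_{n-1}$ and $p_{n-2}/q_{n-2}$; that is clearly the intended reading, so no action is needed.
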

   \begin{proof}
The first inequality  follows directly from \cite[Lemma 2.2]{bu}
and its proof. The second follows from the first since $q_{n}\leq
(C+1)^{n}$.
\end{proof}

     \begin{lemma}\label{ss0912}
Let $$\alpha=[a_0,a_1,a_2,\cdots],$$
$$\beta=[b_0,b_1,b_2,\cdots],$$ and
$$\gamma=[c_0,c_1,c_2,\cdots]$$ be  the continued fraction expansions of
three real numbers. Let $m$ be a positive integer such that
$a_i=b_i$ for $i=1,\cdots,m-1$, and $a_{m}\neq b_{m}$, and let $n$
be a positive integer such that $a_i=c_i$ for $i=1,\cdots,n-1$,
and $a_{n}\neq c_{n}$. Then if $m>n+2$, we have
$$|\alpha-\beta|\leq\frac{72}{2^{m-n-3}}|\alpha-\gamma|.$$If $0\leq m-n\leq 2$, $n\geq2$ and $a_{m},a_{m+1},a_{m+2}\leq D$, we have
$$|\alpha-\beta|\leq 4\cdot72D^4|\alpha-\gamma|.$$
   \end{lemma}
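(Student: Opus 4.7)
The plan is to combine a universal upper bound for $|\alpha-\beta|$ coming from the shared initial segment of the continued fraction expansions with the lower bound for $|\alpha-\gamma|$ supplied by Lemma 2.3. Since $a_i=b_i$ for $i<m$, the numbers $\alpha$ and $\beta$ share the convergent $p_{m-1}/q_{m-1}$, and both lie on the same side of it (the side opposite to $p_{m-2}/q_{m-2}$). Using either Lemma 2.2 or the explicit identity
\[
|\alpha-\beta| \;=\; \frac{|\alpha_m-\beta_m|}{(q_{m-1}\alpha_m+q_{m-2})(q_{m-1}\beta_m+q_{m-2})},
\]
where $\alpha_m=[a_m;a_{m+1},\ldots]$ and $\beta_m=[b_m;b_{m+1},\ldots]$ are both $\geq 1$, one obtains the basic estimate $|\alpha-\beta|<1/q_{m-1}^2$. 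Simultaneously, Lemma 2.3 gives both
\[
|\alpha-\gamma|\;\geq\;\frac{1}{72\,q_n q_{n+2}} \qquad \text{and} \qquad |\alpha-\gamma|\;\geq\;\frac{1}{72\,q_n^2\,a_{n+1}a_{n+2}},
\]
and I will pick whichever form is more convenient in each regime.

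For Case~1 ($m>n+2$), the key is to show that $q_{m-1}$ is exponentially larger than $q_{n+2}$. When $m\geq n+4$, inequality (2.1) applied with indices $(m-n-3,\,n+2)$ gives $q_{m-1}\geq 2^{(m-n-4)/2}\,q_{n+2}$, so
\[
\frac{|\alpha-\beta|}{|\alpha-\gamma|} \;\leq\; \frac{72\,q_n q_{n+2}}{q_{m-1}^2} \;\leq\; \frac{72\,q_n}{2^{m-n-4}\,q_{n+2}} \;\leq\; \frac{72}{2^{m-n-4}},
\]
since $q_n\leq q_{n+2}$. The boundary case $m=n+3$ is immediate ($q_{m-1}=q_{n+2}$, so the ratio is at most $72$), and both sub-cases give a bound of the required form $O(2^{-(m-n-3)})$.

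For Case~2 ($0\leq m-n\leq 2$, $n\geq 2$), I will analyze the three sub-cases $m-n=0,1,2$ separately, comparing $q_{m-1}$ to the relevant $q_i$ via the recursion $q_k\leq (a_k+1)q_{k-1}$. When $m=n$, the hypothesis $a_m,a_{m+1},a_{m+2}\leq D$ controls exactly $a_n,a_{n+1},a_{n+2}$, so $q_n\leq (D+1)q_{n-1}$ and $a_{n+1}a_{n+2}\leq D^2$, yielding a ratio bound of order $D^4$ via the $a_{n+1}a_{n+2}$ form of Lemma 2.3. When $m=n+1$, one has $q_{m-1}=q_n$ and $a_{n+1}a_{n+2}\leq D^2$ directly, giving a ratio at most $72D^2$. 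When $m=n+2$, the partial quotient $a_{n+1}$ is not controlled by the hypothesis, so I will use the $q_n q_{n+2}$ form of Lemma 2.3 and bound $q_n q_{n+2}/q_{n+1}^2\leq q_{n+2}/q_{n+1}\leq a_{n+2}+1\leq D+1$; this gives a bound of order $D$. Taking the largest of the three bounds yields the claimed bound of order $D^4$.

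The main obstacle is the careful bookkeeping in Case~2: the partial quotients controlled by $D$ are $a_m,a_{m+1},a_{m+2}$ (near index $m$), while the natural estimates from Lemma 2.3 involve indices near $n$. Only when $m=n$ or $m=n+1$ do these two indexings fully overlap; for $m=n+2$ one must sidestep the uncontrolled $a_{n+1}$ by using the denominator form of Lemma 2.3 rather than the partial-quotient form. The $m=n$ sub-case, which dominates the final constant, requires $n\geq 2$ so that $q_{n-1}$ is well-defined and positive, which is precisely why the hypothesis $n\geq 2$ is imposed.
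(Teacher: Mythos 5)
Your strategy is sound and Case 2 is fully correct; in fact your treatment of the sub-case $m=n$ is simpler than the paper's. The paper handles $m=n$ by writing $\alpha,\beta,\gamma$ as M\"obius images of their tails $\alpha',\beta',\gamma'$ and estimating $|\alpha-\beta|\le D/\bigl(q_{m-1}(q_{m-1}\alpha'+q_{m-2})\bigr)$ against a matching lower bound for $|\alpha-\gamma|$; you instead observe that $\alpha$ and $\beta$ both lie in the cylinder of length at most $1/q_{n-1}^2$ determined by $a_0,\dots,a_{n-1}$ and compare with the $1/(72q_n^2a_{n+1}a_{n+2})$ form of Lemma \ref{ss091}, and the bookkeeping $q_n\le (D+1)q_{n-1}$, $a_{n+1}a_{n+2}\le D^2$ does give $72(D+1)^2D^2\le 4\cdot 72D^4$ exactly. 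Your sub-cases $m-n=1,2$, including the observation that $a_{n+1}$ is uncontrolled when $m=n+2$, match the paper's computation.

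The one place you fall short of the statement as written is Case 1: from $|\alpha-\beta|<1/q_{m-1}^2$, $q_{m-1}\ge 2^{(m-n-4)/2}q_{n+2}$ and $q_n\le q_{n+2}$ you obtain $144/2^{m-n-3}$, not the claimed $72/2^{m-n-3}$, and ``a bound of the required form $O(2^{-(m-n-3)})$'' is weaker than the stated constant, which is carried verbatim into the numerical estimates of Section 3 (e.g.\ the bound $|\nu_1-\nu_0|\le \tfrac{72}{2^{(n-1)s-3}}|\alpha-\nu_0|$ and inequality (\ref{sxs71})). The missing factor of $2$ is easy to recover: either bound $|\alpha-\beta|$ by $\max\bigl(1/(q_mq_{m-1}),\,1/(q'_mq'_{m-1})\bigr)$ as the paper does and apply (\ref{e90}) to both factors, or keep your estimate and replace $q_n\le q_{n+2}$ by $q_{n+2}\ge q_{n+1}+q_n\ge 2q_n$ in the last step. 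With that repair the argument is complete.
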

\begin{proof}
Let $\{\frac{p_{i}}{q_{i}}\}_{i\geq0}$ be the sequence of
convergents of $\alpha$ and let
$\{\frac{p'_{i}}{q'_{i}}\}_{i\geq0}$ be the sequence of
convergents of $\beta$. Then by (\ref{e90}) and Lemmas \ref{ss}
and \ref{ss091}, if $m>n+2$,
$$|\alpha-\beta|\leq\max(\frac{1}{q_{m}q_{m-1}}, \frac{1}{q'_{m}q'_{m-1}}) \leq\frac{1}{2^{m-n-3}q_{n}q_{n+2}}  \leq\frac{72}{2^{m-n-3}} |\alpha-\gamma|
 .$$
Now we assume that $a_{m},a_{m+1},a_{m+2}\leq D$. If $m-n=1,2$, we
have
 $$|\alpha-\beta|\leq\max(\frac{1}{q_{m}q_{m-1}}, \frac{1}{q'_{m}q'_{m-1}})  \leq\frac{1}{q_{m-1}^{2}} \leq\frac{4D^2}{q_{n}q_{n+2}} \leq 4\cdot72D^2|\alpha-\gamma|
 .$$

 If  $m=n$, set $$\alpha'=[a_{m},a_{m+1},a_{m+2},\cdots],$$
$$\beta'=[b_{m},b_{m+1},b_{m+2},\cdots],$$ and
$$\gamma'=[c_{m},c_{m+1},c_{m+2},\cdots].$$ Then we have
$$\alpha=\frac{p_{m-1}\alpha'+p_{m-2}}{q_{m-1}\alpha'+q_{m-2}},$$
$$\beta=\frac{p_{m-1}\beta'+p_{m-2}}{q_{m-1}\beta'+q_{m-2}},$$
and
$$\gamma=\frac{p_{m-1}\gamma'+p_{m-2}}{q_{m-1}\gamma'+q_{m-2}}.$$
As the function $\frac{p_{m-1}x+p_{m-2}}{q_{m-1}x+q_{m-2}}$ is
monotone, we have
 {\setlength{\arraycolsep}{0pt}
\begin{eqnarray*}\label{for90}
&&|\alpha-\beta|\\
&\leq&\max(|\alpha-\frac{p_{m-1}}{q_{m-1}}|,|\alpha-\frac{p_{m-1}+p_{m-2}}{q_{m-1}+q_{m-2}}|)
\\
&\leq&\max(\frac{1}{q_{m-1}(q_{m-1}\alpha'+q_{m-2})},\frac{|\alpha'-1|}{(q_{m-1}\alpha'+q_{m-2})(q_{m-1}+q_{m-2})})\\
&\leq&\frac{D}{q_{m-1}(q_{m-1}\alpha'+q_{m-2})}.
\end{eqnarray*}
}

On the other hand, by Lemma \ref{ss091}, $$|\alpha-\gamma|=
\frac{|\alpha'-\gamma'|}{(q_{m-1}\alpha'+q_{m-2})(q_{m-1}\gamma'+q_{m-2})}\geq\frac{1}{4\cdot72D^3q_{m-1}(q_{m-1}\alpha'+q_{m-2})}.$$
Hence the second assertion of the lemma follows.
\end{proof}
The next lemma follows directly from an elementary calculation.
  \begin{lemma}\label{ss0913}
Let $$\cdots,a_{-1},a_0,a_1,a_2,\cdots,$$ and
$$\cdots,b_{-1},b_0,b_1,b_2,\cdots,$$ be  two doubly infinite
sequences of positive integer, satisfying $a_i=b_i$ for $m\leq
i\leq n$. Set
$$A_i=\frac{([a_{i+1},a_{i+2},\cdots]-[b_{i+1},b_{i+2},\cdots])(\tfrac{1}{[a_{i},a_{i-1}\cdots]}-\tfrac{1}{[b_{i},b_{i-1}\cdots]})}{([a_{i+1},a_{i+2},\cdots]+\tfrac{1}{[a_{i},a_{i-1}\cdots]})([b_{i+1},b_{i+2},\cdots]+\tfrac{1}{[b_{i},b_{i-1}\cdots]})}.$$
  Then we have $$A_{m-1}=A_{m}=\cdots=A_{n}.$$ \end{lemma}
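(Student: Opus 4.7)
\textbf{Proof plan for Lemma \ref{ss0913}.} The strategy is to prove the one-step identity $A_{i-1}=A_i$ under the single assumption $a_i=b_i$; since by hypothesis $a_i=b_i$ for every $i\in\{m,m+1,\ldots,n\}$, chaining gives $A_{m-1}=A_m=\cdots=A_n$. For brevity, set $\alpha_i:=[a_i,a_{i+1},\ldots]$ and $\tilde\alpha_i:=[a_i,a_{i-1},\ldots]$, and analogously $\beta_i,\tilde\beta_i$, so that
$$A_i=\frac{(\alpha_{i+1}-\beta_{i+1})\bigl(\tfrac{1}{\tilde\alpha_i}-\tfrac{1}{\tilde\beta_i}\bigr)}{(\alpha_{i+1}+\tfrac{1}{\tilde\alpha_i})(\beta_{i+1}+\tfrac{1}{\tilde\beta_i})}.$$
The two defining recurrences are $\alpha_i=a_i+1/\alpha_{i+1}$ (reading to the right) and $\tilde\alpha_i=a_i+1/\tilde\alpha_{i-1}$ (reading to the left), and likewise for $\beta$.

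Fix any $i$ with $a_i=b_i$. The forward recurrence gives $\alpha_i-\beta_i=1/\alpha_{i+1}-1/\beta_{i+1}=-(\alpha_{i+1}-\beta_{i+1})/(\alpha_{i+1}\beta_{i+1})$, and the backward recurrence, after inverting, gives $1/\tilde\alpha_i-1/\tilde\beta_i=-(1/\tilde\alpha_{i-1}-1/\tilde\beta_{i-1})/(\tilde\alpha_i\tilde\beta_i)$. Multiplying these two identities shows that the numerator of $A_i$ equals $\alpha_{i+1}\beta_{i+1}/(\tilde\alpha_i\tilde\beta_i)$ times the numerator of $A_{i-1}$. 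For the denominators, the substitution $1/\tilde\alpha_{i-1}=\tilde\alpha_i-a_i$ (and its $\beta$-analogue) collapses $\alpha_i+1/\tilde\alpha_{i-1}$ to $\tilde\alpha_i+1/\alpha_{i+1}=(1+\alpha_{i+1}\tilde\alpha_i)/\alpha_{i+1}$, while $\alpha_{i+1}+1/\tilde\alpha_i=(1+\alpha_{i+1}\tilde\alpha_i)/\tilde\alpha_i$; consequently the denominator of $A_i$ also equals $\alpha_{i+1}\beta_{i+1}/(\tilde\alpha_i\tilde\beta_i)$ times the denominator of $A_{i-1}$. The two scaling factors cancel and $A_i=A_{i-1}$, as required.

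The entire argument is an elementary cancellation; no serious obstacle arises. The one point requiring care is sign bookkeeping — each of the forward and backward transfer steps contributes a minus sign in the numerator, and these must cancel in the product — together with the observation that the symmetric quantity $1+\alpha_{i+1}\tilde\alpha_i$ (and its $\beta$-twin) secretly governs both denominator transfers. Once this common symmetric factor is isolated, the identity $A_i=A_{i-1}$ is transparent, and the lemma reduces to a straightforward telescoping.
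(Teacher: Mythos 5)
Your proof is correct: the one-step transfer identities show that both the numerator and the denominator of $A_{i-1}$ equal $\tfrac{\tilde\alpha_i\tilde\beta_i}{\alpha_{i+1}\beta_{i+1}}$ times those of $A_i$ whenever $a_i=b_i$, so the factors cancel and the telescoping gives $A_{m-1}=\cdots=A_n$. This is exactly the ``elementary calculation'' the paper invokes without writing out, so your argument supplies the intended proof.
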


 We recall that the continued fraction expansion of a real quadratic number $\alpha$ is eventually periodic. It is also well-known that if there exists an
  $A\in PSL(2,\mathbb{Z})$ such that $\beta=A\alpha$
, then the continued fraction expansions of $\alpha$ and $\beta$
have the same tail. Thus throughout this paper, we assume without
loss of generality that $\alpha=[\overline{b_1,\cdots,b_s}]$ where
$b_1,\cdots,b_s$ and $s$ are positive integers. We extend the
notation $b_i$ to all $i\in\mathbb{Z}$ by requiring $b_i=b_j$
whenever $i\equiv j\ (mod\ s)$.  It is well-known that the Galois
conjugate of $\alpha$ is
$$\alpha^{\sigma}=-\frac{1}{\alpha'}=-\frac{1}{[\overline{ b_s,\cdots,b_1}]}.$$

Let  $B=2+max(b_1,b_2,\cdots,b_s)$.

The following lemma is from \cite[Corollary 2.3]{bu}.
    \begin{lemma}\label{ssc}
Let $$\xi=[a_0,a_1,a_2,\cdots]$$ be an irrational real number. Let
$$\beta=[a_0,a_1,\cdots,a_r,a'_1,\cdots,a'_t, \overline{b_{i+1},\cdots,b_{i+s}}]$$  where $t\geq1$, $a_{r+1}\neq
a'_1$ and $a'_t\neq b_{i}$. Then we have
$$2\frac{|\xi-\beta|}{|\beta-\beta^{\sigma}|}\geq\frac{1}{96B^3(B+1)^2}.$$
   \end{lemma}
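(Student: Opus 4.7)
The plan is to estimate $|\beta-\beta^\sigma|$ from above and $|\xi-\beta|$ from below using the linear fractional formulas that continued fractions supply. Setting $N=r+t$ and letting $P_j/Q_j$ denote the convergents of $\beta$ truncated at position $j$, and writing $\gamma=[\overline{b_{i+1},\ldots,b_{i+s}}]$ together with $\mu=[\overline{b_i,b_{i-1},\ldots,b_{i-s+1}}]$ (so that $\gamma^\sigma=-1/\mu$), one has $\beta=(P_N\gamma+P_{N-1})/(Q_N\gamma+Q_{N-1})$ and $\beta^\sigma$ given by the same formula with $\gamma^\sigma$ in place of $\gamma$. Using $P_NQ_{N-1}-P_{N-1}Q_N=\pm1$ this yields
\[
|\beta-\beta^\sigma|=\frac{|\gamma-\gamma^\sigma|}{(Q_N\gamma+Q_{N-1})\,|Q_N\gamma^\sigma+Q_{N-1}|}.
\]

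In this formula $|\gamma-\gamma^\sigma|\le B+1$ and $Q_N\gamma+Q_{N-1}\ge Q_N$ are trivial, while the crucial estimate is
\[
|Q_N\gamma^\sigma+Q_{N-1}|=\frac{|Q_N-\mu Q_{N-1}|}{\mu}\ge\frac{Q_{N-1}}{B(B+1)}.
\]
This is where the hypothesis $a'_t\neq b_i$ enters. Expanding $Q_N=a'_tQ_{N-1}+Q_{N-2}$ and $\mu=b_i+1/\mu'$, with $\mu'>1+1/B$ forced by the periodic bound on the $b_j$, a short case analysis on whether $a'_t>b_i$ or $a'_t<b_i$ yields $|Q_N-\mu Q_{N-1}|\ge Q_{N-1}/(B+1)$. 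Consequently $|\beta-\beta^\sigma|\le B(B+1)^2/(Q_NQ_{N-1})$.

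For the lower bound on $|\xi-\beta|$, I would use the analogous factorization through the shared prefix. Setting $\tilde\eta=[a_{r+1};a_{r+2},\ldots]$ and $\omega=[a'_1;a'_2,\ldots,a'_t,\gamma]$, one has
\[
|\xi-\beta|=\frac{|\tilde\eta-\omega|}{(Q_r\tilde\eta+Q_{r-1})(Q_r\omega+Q_{r-1})}.
\]
Since $a_{r+1}\neq a'_1$, Lemma~\ref{ss091} applied to $\tilde\eta$ and $\omega$ at position $0$ gives $|\tilde\eta-\omega|\ge 1/(72\,a_{r+2}a_{r+3})$, while the denominators satisfy $Q_r\tilde\eta+Q_{r-1}\le(a_{r+1}+2)Q_r$ and $Q_r\omega+Q_{r-1}\le(a'_1+2)Q_r$. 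The matrix identity $Q_N=Q_rp'_{t-1}+Q_{r-1}q'_{t-1}$ and $Q_{N-1}=Q_rp'_{t-2}+Q_{r-1}q'_{t-2}$, where $p'_j/q'_j$ are the convergents of $[a'_1;a'_2,\ldots,a'_t]$, then allows the $Q_NQ_{N-1}$ in the denominator of the bound on $|\beta-\beta^\sigma|$ to be compared against the factors in the denominator of $|\xi-\beta|$, producing a cancellation that leaves a ratio depending only on $B$.

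Tracking constants through these estimates and the factor of $2$ in the ratio gives the claimed bound $2|\xi-\beta|/|\beta-\beta^\sigma|\ge 1/[96B^3(B+1)^2]$. The principal obstacle I anticipate is the case analysis of the second paragraph: the hypothesis $a'_t\neq b_i$ has to rule out near-cancellation between $Q_N$ and $\mu Q_{N-1}$, and extracting the factor $1/(B+1)$ (rather than a weaker bound that would destroy the final inequality) relies on carefully using $\mu'>1+1/B$, itself a consequence of $b_j\le B-2$. After this is done, the remaining work is a bookkeeping exercise matching the factorizations of $|\xi-\beta|$ and $|\beta-\beta^\sigma|$ and verifying that the unbounded partial quotients $a_{r+1},a_{r+2},a_{r+3}$ of $\xi$ cancel against the corresponding factors arising from $\beta$.
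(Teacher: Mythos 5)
First, note that the paper does not actually prove this lemma: it is imported verbatim from Bugeaud \cite[Corollary 2.3]{bu}, so there is no in-paper argument to match yours against. Judged on its own merits, your first half is sound: the factorization of $|\beta-\beta^\sigma|$ through the convergents at $N=r+t$, and the estimate $|Q_N-\mu Q_{N-1}|\ge Q_{N-1}/(B+1)$ obtained from $a'_t\neq b_i$ together with $\mu'>1+1/B$, are both correct and do give $|\beta-\beta^\sigma|\le B(B+1)^2/(Q_NQ_{N-1})$.

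The gap is in the combination step. Your chain of estimates yields
$$\frac{2|\xi-\beta|}{|\beta-\beta^\sigma|}\;\ge\;\frac{2\,Q_NQ_{N-1}}{72\,B(B+1)^2\,a_{r+2}a_{r+3}\,(a_{r+1}+2)(a'_1+2)\,Q_r^2},$$
and the factors $a_{r+1},a_{r+2},a_{r+3}$ cannot cancel against anything coming from $\beta$, because $\beta$'s expansion simply does not contain them: after the shared prefix it continues with $a'_1,\dots,a'_t$ and the periodic tail, so $Q_N$ and $Q_{N-1}$ carry no information about $\xi$'s partial quotients. Concretely, take $t=1$, $a'_1=2$ and $a_{r+1}=a_{r+2}=a_{r+3}=M$ with $M$ huge: then $Q_NQ_{N-1}\le 3Q_r^2$ and your lower bound is of order $M^{-3}$, far below $1/(96B^3(B+1)^2)$, even though the lemma holds there (in that configuration $|\tilde\eta-\omega|\asymp M$, not $\asymp M^{-2}$, so the crude bound $|\tilde\eta-\omega|\ge 1/(72a_{r+2}a_{r+3})$ is the culprit — it throws away the main term $|a_{r+1}-a'_1|$). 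The dual problem occurs when $a_{r+1}=a'_1+1$ with both large: there $|\tilde\eta-\omega|/\tilde\eta\asymp 1/a_{r+1}$ genuinely, and the compensating factor must come from $|Q_N\gamma^\sigma+Q_{N-1}|\asymp a'_tQ_{N-1}$, which your weakening to $Q_{N-1}/(B(B+1))$ has already discarded. So each individual inequality is true but collectively they are too lossy; a correct proof requires a case analysis on the relative sizes of $a_{r+1}$ versus $a'_1$ (and of $a'_t$ versus $b_i$) that retains these main terms, which is precisely what Bugeaud's argument does and what your sketch defers to ``bookkeeping.''
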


Set $$\alpha_i=[\overline{b_{i},  \cdots, b_{i+s-1}}],$$
$$\alpha'_i=[\overline{b_{i+s-1},  \cdots, b_{i}}]=-\tfrac{1}{\alpha_i^{\sigma}},$$ and set $$\Phi=\{\alpha_1,
\cdots,\alpha_s,\alpha'_1, \cdots,\alpha'_s\}.$$

For each $\phi\in\Phi$, set
$$\xi_{r,\phi}=[a_0,a_1,\cdots,a_r,\phi].$$ By Lemma \ref{ssc}, when $c_{\alpha}\xi<\tfrac{1}{48B^3(B+1)^2},$ we
have
\begin{equation}\label{e4}c_{\alpha}\xi=\min_{\phi\in\Phi}\liminf_{r\rightarrow\infty}2\tfrac{|\xi-\xi_{r,\phi}|}{|\xi_{r,\phi}-\xi_{r,\phi}^{\sigma}|}.\end{equation}

Now we repeat the calculations in (cf.\cite{bu,pe}) to get an
explicit expression for
$2\tfrac{|\xi-\xi_{r,\phi}|}{|\xi_{r,\phi}-\xi_{r,\phi}^{\sigma}|}.$

Let $\{\frac{p'_{n}}{q'_{n}}\}_{n\geq0}$ be the sequence of
convergents of $[a_0,a_1,a_2,\cdots]$. Then by an elementary
property of continued fractions (cf.\cite[p.133]{hw}) we have
$$\xi_{r,\phi}=\frac{p'_{r}\phi+p'_{r-1}}{q'_{r}\phi+q'_{r-1}}.$$
Hence
$$|\xi_{r,\phi}-\xi_{r,\phi}^{\sigma}|=|\frac{\phi-\phi^{\sigma}}{(q'_{r}\phi+q'_{r-1})(q'_{r}\phi^{\sigma}+q'_{r-1})}|.$$

 Set
$$d_{r}:=\frac{q'_{r}}{q'_{r-1}}=[a_r,a_{r-1},\cdots,a_1]$$ and
$$D_{r}:=[a_{r+1},a_{r+2},\cdots].$$
 Then {\setlength{\arraycolsep}{0pt}
\begin{eqnarray*}
|\xi-\xi_{r,\phi}|&=&|\frac{p'_{r}D_{r}+p'_{r-1}}{q'_{r}D_{r}+q'_{r-1}}-\frac{p'_{r}\phi+p'_{r-1}}{q'_{r}\phi+q'_{r-1}}|\\
&=&|\frac{\phi-D_{r}}{(q'_{r}\phi+q'_{r-1})(q'_{r}D_{r}+q'_{r-1})}|.\\
\end{eqnarray*}
} Hence we have
\begin{equation}\label{d4ss3}\frac{2|\xi-\xi_{r,\phi}|}{|\xi_{r,\phi}-\xi_{r,\phi}^{\sigma}|}=\frac{2|D_{r}-\phi||\tfrac{1}{d_{r}}+\phi^{\sigma}|}{|\phi-\phi^{\sigma}||D_{r}+\tfrac{1}{d_{r}}|}.\end{equation}

 \section{Hall's ray for quadratic Lagrange spectrum}
In this section, we prove the existence of Hall's ray for
quadratic Lagrange spectrum. Let $n$ be the minimal positive
integer satisfying
\begin{equation}\label{sxs71vb}2^{(n-1)s}>16\cdot16\cdot36\cdot144B^{2s+8}.\end{equation}
It follows from (\ref{sxs71vb}) that $n\geq3$. Set
$\psi=\frac{1}{72\cdot72B^{s(6n+6)+4}}$ and let
$\epsilon\in(0,\psi]$.

    \begin{lemma}\label{ss376}
\begin{equation}\label{ess376}\epsilon=\frac{2(B+\frac{1}{x}-\alpha')(y-\alpha)}{(\alpha\alpha'+1)(1+y(B+\frac{1}{x}))}\end{equation}
defines an increasing  function $f_{\epsilon}(x)=y$ which maps
$(0,+\infty)$ injectively  into $(\alpha-1/2,\alpha+1/2)$. On the
interval $[B,B+2]$, we have $M/2\leq f'_{\epsilon}(x)\leq 2M$
where
$$M=\frac{\epsilon(\alpha\alpha'+1)[2\alpha\alpha'+2-\epsilon(\alpha\alpha'+1)]}{[(2(B-\alpha')-\epsilon
B(\alpha\alpha'+1))(B+1)+2-\epsilon(\alpha\alpha'+1)]^2}.$$
   \end{lemma}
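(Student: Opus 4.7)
The plan is to solve (\ref{ess376}) explicitly for $y$, differentiate, and then read off all three assertions. I would first substitute $u := B + 1/x$, which is a monotonically decreasing bijection from $(0,\infty)$ onto $(B,\infty)$. Since (\ref{ess376}) is linear in $y$, this gives the explicit solution
$$y = \frac{2\alpha(u - \alpha') + \epsilon(\alpha\alpha' + 1)}{2(u - \alpha') - \epsilon(\alpha\alpha' + 1)\, u}.$$
Because $\epsilon \leq \psi$ is extremely small and $B - \alpha' \geq 1$, the denominator is positive on $u \in [B, \infty)$, so $y$ is a well-defined smooth function of $u$, hence of $x$.

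Differentiating by the quotient rule (the terms linear in $u$ cancel completely) yields
$$\frac{dy}{du} = \frac{-\epsilon(\alpha\alpha' + 1)^2 (2 - \epsilon)}{\bigl[ 2(u - \alpha') - \epsilon(\alpha\alpha' + 1)\, u \bigr]^2} < 0.$$
Combined with $du/dx = -1/x^2 < 0$, this gives $f_\epsilon'(x) > 0$, hence monotonicity, which in particular implies injectivity. For the range I would compute the two limits: as $x \to 0^+$ one gets $y \to 2\alpha / (2 - \epsilon(\alpha\alpha'+1))$, and as $x \to \infty$ one gets the value of the explicit formula at $u = B$. A direct calculation shows each limit differs from $\alpha$ by a quantity of order $\epsilon$; using $\epsilon \leq \psi$ together with the crude estimates $\alpha, \alpha' < B$, this is far smaller than $1/2$, so the range lies inside $(\alpha - 1/2, \alpha + 1/2)$.

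For the derivative bounds, I would combine the chain rule with the clean identity
$$x \bigl[ 2(u - \alpha') - \epsilon(\alpha\alpha' + 1)\, u \bigr] = A x + C,$$
where $A := 2(B - \alpha') - \epsilon B(\alpha\alpha' + 1)$ and $C := 2 - \epsilon(\alpha\alpha' + 1)$, both positive (again using the smallness of $\psi$), to obtain
$$f_\epsilon'(x) = \frac{\epsilon(\alpha\alpha' + 1)^2 (2 - \epsilon)}{(A x + C)^2}.$$
Plugging in $x = B+1$ and invoking the identity $2\alpha\alpha' + 2 - \epsilon(\alpha\alpha'+1) = (\alpha\alpha'+1)(2-\epsilon)$ reproduces $M$ exactly, i.e.\ $M = f_\epsilon'(B+1)$. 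Hence $f_\epsilon'(x)/M = \bigl( (A(B+1) + C) / (Ax+C) \bigr)^2$, a decreasing function of $x$. The main (but routine) obstacle is to verify this ratio lies in $[1/2,2]$ at the two endpoints. Using $A/(AB+C) \leq 1/B$ gives, at $x = B$, a ratio $\leq (1 + 1/B)^2 \leq (4/3)^2 < 2$, and at $x = B+2$, a ratio $\geq (1 - 1/(B+2))^2 \geq (4/5)^2 > 1/2$, both bounds valid since $B \geq 3$.
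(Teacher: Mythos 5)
Your proposal is correct and follows essentially the same route as the paper: solve (\ref{ess376}) explicitly for $y$, compute $f'_{\epsilon}$ in closed form, and observe that $M=f'_{\epsilon}(B+1)$ so the bounds on $[B,B+2]$ reduce to comparing $(Ax+C)^2$ at the endpoints. The substitution $u=B+1/x$ is only a cosmetic reorganization; your added verifications (positivity of the denominator, the limits as $x\to0^+$ and $x\to\infty$, and the $(1\pm 1/B)^2$ endpoint estimates using $B\geq3$) are exactly the details the paper's terse ``hence the lemma follows'' leaves to the reader, and they are all sound.
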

\begin{proof}
Solving the equation, we get
$$f_{\epsilon}(x)=y=\frac{2\alpha[(B-\alpha')x+1]+\epsilon(\alpha\alpha'+1)x}{2[(B-\alpha')x+1]-\epsilon
(\alpha\alpha'+1)(Bx+1)}.$$

A direct calculation shows that
$$f'_{\epsilon}(x)=\frac{\epsilon(\alpha\alpha'+1)[2\alpha\alpha'+2-\epsilon(\alpha\alpha'+1)]}{[(2(B-\alpha')-\epsilon
B(\alpha\alpha'+1))x+2-\epsilon(\alpha\alpha'+1)]^2}.$$ Hence the
lemma follows.
\end{proof}

 \begin{lemma}\label{ssxa}For each $\epsilon\in(0,\psi]$
There exist a positive integer $k>3n$ and  two positive infinite
continued fractions $$x=[e_1,e_2,\cdots]$$ and
$$y=[\underbrace{b_1,\cdots,b_s}_{k},f_1,f_2,\cdots]$$ such that
\begin{enumerate} \item
equation (\ref{ess376}) is valid, i.e., $f_{\epsilon}(x)=y$;
 \item
  $(f_1,f_2,\cdots,f_s)\neq (b_1,\cdots,b_s)$;
 \item
 any subblock of $[\overline{b_1\cdots b_s}]$ of length $> ns$
does not occurs in $e_1e_2\cdots$ and $f_{m+1}f_{m+2}\cdots$.
\end{enumerate}

   \end{lemma}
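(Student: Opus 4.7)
The plan is to parametrize by $x\in[B,B+2]$ and use Lemma \ref{ss376} to transport properties between $x$ and $y:=f_\epsilon(x)$. Two observations make this natural. First, since $B>\max_i b_i$, any $x\in[B,B+2]$ automatically has first partial quotient $e_1\in\{B,B+1\}$, which lies outside the period, so subblocks of the period in $(e_i)$ can only start at position $i\geq 2$. Second, since $\epsilon\leq\psi$ is minuscule, $f_\epsilon([B,B+2])$ is a very short interval near $\alpha$; combined with Lemma \ref{ss091} this forces every $y$ in the image to share many initial partial quotients with $\alpha=[\overline{b_1,\ldots,b_s}]$.

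The construction proceeds in three steps. First, let $k$ be the largest integer for which the cylinder $J_k$ of continued fractions starting with exactly $k$ copies of $b_1\cdots b_s$ is contained in $f_\epsilon([B,B+2])$; the bound on $\psi$ together with inequality (\ref{sxs71vb}) will force $k>3n$. Second, set $I:=f_\epsilon^{-1}(J_k)\subset[B,B+2]$; by the derivative estimate $M/2\leq f'_\epsilon\leq 2M$ we get $|I|\geq |J_k|/(2M)$. For every $x\in I$, conditions (i) and (ii) of the lemma hold automatically: $y=f_\epsilon(x)$ is of the required form and $(f_1,\ldots,f_s)\neq (b_1,\ldots,b_s)$.

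Third, we must select $x\in I$ so that condition (iii) also holds, namely that neither $(e_i)_{i\geq 2}$ nor the tail $(f_j)_{j\geq 1}$ contains a subblock of the period of length greater than $ns$. The set of $x$ whose expansion contains such a subblock at a fixed position $p\geq 2$ is a union of at most $s$ cylinders of Lebesgue measure at most $1/(q_{p-1}q_{p+ns})$, and by (\ref{e90}) summing over $p$ produces a geometric-series bound of order $B^{-2ns}$ times an absolute constant. The $y$-bad tails form a set in $J_k$ of comparable relative measure, whose preimage in $I$ is inflated by at most the factor $2/M$ coming from the derivative bound. Comparing both bad contributions with $|I|\geq |J_k|/(2M)$ and invoking (\ref{sxs71vb}) shows that the total bad measure is strictly less than $|I|$, so a good $x\in I$ exists.

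The main obstacle is precisely this final quantitative balancing. Several very small quantities interact: $|J_k|$ decreases like a negative power of $B^{ks}$, $M$ is linear in $\epsilon$ and hence extremely small, and the bad-measure bound scales like $B^{-2ns}$. The inequality (\ref{sxs71vb}) together with the explicit choice $\psi=1/(72\cdot 72B^{s(6n+6)+4})$ is calibrated so that, when these estimates are assembled, the good $x$ in $I$ form a set of positive measure; tracking exactly how the constants $72\cdot 72$ in $\psi$ and $16\cdot 16\cdot 36\cdot 144$ in (\ref{sxs71vb}) interact through the factors $M$, $|J_k|$, and $B^{-2ns}$ is the technical core of the argument.
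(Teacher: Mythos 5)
There is a genuine gap, and it is fatal to the strategy rather than a matter of tuning constants. Your final step asserts that the set of $x\in I$ whose expansion $e_1e_2\cdots$ contains a period-subblock of length $>ns$ somewhere has measure strictly less than $|I|$. But the bad set at a \emph{fixed} position $p$ is a union over \emph{all} admissible prefixes $e_1,\ldots,e_{p-1}$, and within each prefix cylinder the bad sub-cylinders occupy a fixed positive proportion (roughly $s\cdot 2^{-ns}$, independent of $p$) of that cylinder. Summing over $p$ therefore gives a divergent series, not a geometric one; your bound $1/(q_{p-1}q_{p+ns})$ treats $q_{p-1}$ as if it came from a single expansion rather than from infinitely many prefixes. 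Indeed, by ergodicity of the Gauss map (or a direct quasi-independence estimate on cylinders), almost every real number contains every fixed finite block infinitely often in its continued fraction expansion, so the set of $x$ satisfying condition (iii) has Lebesgue measure \emph{zero}. No calibration of $\psi$ and (\ref{sxs71vb}) can rescue a comparison of the form ``bad measure $<|I|$.'' A second, independent difficulty is simultaneity: even granting a nonempty good set for $x$, you must also arrange that $y=f_\epsilon(x)$ is good, and since both good sets are null, measure gives no way to intersect them through $f_\epsilon$.

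The paper avoids measure entirely and instead runs an alternating (back-and-forth) construction. Start with $\mu_0$ whose partial quotients all lie in $\{B,B+1\}$ (hence contain no letter of the period, since $B=2+\max_i b_i$); its image $\nu_0=f_\epsilon(\mu_0)$ necessarily begins with $k>3n$ copies of the period by Lemma \ref{ss091} and the size of $\epsilon$. Then one freezes the first $ns$ digits of $\nu_0$ beyond the periodic prefix and replaces its tail by a word from $\{B,B+1\}^{\infty}$, pulls back by $f_\epsilon^{-1}$, freezes $ns$ more digits of the resulting $\mu_1$, replaces its tail by a $\{B,B+1\}$-word, pushes forward, and so on. Lemma \ref{ss0912} and the derivative bounds of Lemma \ref{ss376} (this is where (\ref{sxs71vb}) enters) guarantee each tail replacement perturbs the partner expansion only beyond the already-frozen digits. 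The limits $x$ and $y$ are then interleavings of $\{B,B+1\}$-words with words of length exactly $ns$, so no period-subblock of length $>ns$ can occur, which is precisely condition (iii). If you want to salvage your approach you would have to replace Lebesgue measure by something adapted to the Cantor set of block-avoiding expansions, which is essentially a harder route to the same interleaving idea.
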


\begin{proof}
Let $\Gamma$ be the uncountable  set of infinte words of elements
from $\{B,B+1\}$. Choose a
$\omega_0=u^{(0)}_1u^{(0)}_2\cdots\in\Gamma$ and set
$\mu_0=[u^{(0)}_1,u^{(0)}_2,\cdots]$ such that
 $\nu_0=f_{\epsilon}(\mu_0)$ is irrational, i.e.,
$\nu_0$ has an infinite continued fraction expansion
$$\nu_0=[\underbrace{b_1,\cdots,b_s}_{k},v^{(0)}_1,v^{(0)}_2,\cdots]$$
where $k\geq0$ indicates the number of times the block
$b_1,\cdots,b_s$ is repeated. and
$$(v^{(0)}_1,v^{(0)}_2,\cdots, v^{(0)}_s)\neq (b_1,\cdots,b_s,).$$
Let $\{\frac{p_{n}}{q_{n}}\}_{n\geq1}$ be the sequence of
convergents of $[\overline{b_1,\cdots,b_s}]$. By Lemma \ref{ss091}
and (\ref{ess376}) we have
\begin{equation}\label{sxs71cd}\frac{1}{72B^{2s(k+3)}}\leq\frac{1}{72q^2_{s(k+3)}}\leq|\nu_0-\alpha|<\epsilon B^4\leq\frac{1}{72B^{s(6n+6)}},\end{equation} hence $k>3n$.

 \begin{claim}\label{5645}
For any $\omega_1=v^{(1)}_1 v^{(1)}_2 \cdots \in\Gamma$,
$$\nu_1=[\underbrace{b_1,\cdots,b_s}_{k},v^{(0)}_1,v^{(0)}_2,\cdots,v^{(0)}_{ns},v^{(1)}_1,v^{(1)}_2,\cdots]$$
lies in the interval $f_{\epsilon}([u^{(0)}_1,u^{(0)}_1+1])$.
\end{claim}
\begin{proof}The distance of $[u^{(0)}_1,u^{(0)}_2,\cdots]$ from the boundary of $[u^{(0)}_1,u^{(0)}_1+1]$ is at
least $\frac{1}{B+2}$. If the claim is invalid, then by Lemma
\ref{ss376} we have
$$|\nu_1-\nu_0|>\frac{M}{2(B+2)}>\frac{\epsilon}{32B^5}.$$ On the
other hand, by Lemma \ref{ss0912}, we have
$$|\nu_1-\nu_0|\leq\frac{72}{2^{(n-1)s-3}}
|\alpha-\nu_0|\leq\frac{72\epsilon B^4}{2^{(n-1)s-3}}.$$ Hence
$$\label{sxs71v}2^{(n-1)s}<8\cdot32\cdot72B^9.$$ This contradicts (\ref{sxs71vb}).
\end{proof}

Now, pick
$\nu_1=[\underbrace{b_1,\cdots,b_s}_{k},v^{(0)}_1,v^{(0)}_2,\cdots,v^{(0)}_{ns},v^{(1)}_1,v^{(1)}_2,\cdots]$
for  some $\omega_1=v^{(1)}_1 v^{(1)}_2 \cdots \in\Gamma$ such
that $v^{(0)}_{ns+1}\neq v^{(1)}_1$ and
$$f^{-1}_{\epsilon}(\nu_1)=\mu_1=[u^{(0)}_1,\cdots,u^{(0)}_{s_0},u^{(1)}_1,u^{(1)}_2,\cdots]$$
is irrational, where $u^{(0)}_{s_0+1}\neq u^{(1)}_1$. By Claim
\ref{5645}, $s_0>0$.

Next pick a
$$\omega_2=u^{(2)}_1,u^{(2)}_2,\cdots\in\Gamma$$ and set
$$\mu_2=[u^{(0)}_1,\cdots,u^{(0)}_{s_0},u^{(1)}_1,u^{(1)}_2,\cdots,u^{(1)}_{ns},u^{(2)}_1,u^{(2)}_2,\cdots]$$
such that $u^{(1)}_{ns+1}\neq u^{(2)}_1$ and $f_{\epsilon}(\mu_2)$
is irrational. Set $\nu_2=f_{\epsilon}(\mu_2)$. Then by Lemmas
\ref{ss0912} and \ref{ss376},
\begin{equation}\label{sxs71}|\nu_2-\nu_1|\leq 2M|\mu_2-\mu_1|\leq
\frac{72}{2^{(n-1)s-4}}M|\mu_0-\mu_1|\leq
\frac{144}{2^{(n-1)s-4}}|\nu_0-\nu_1|.\end{equation}

 \begin{claim}\label{5645}
There exists a positive $s_1$ such that
$$\nu_2=[\underbrace{b_1,\cdots,b_s}_{k},v^{(0)}_1,v^{(0)}_2,\cdots,v^{(0)}_{ns},v^{(1)}_1,\cdots,v^{(1)}_{s_1},v^{(2)}_1,v^{(2)}_2,\cdots]$$
where $v^{(1)}_{s_1+1}\neq v^{(2)}_1$.
\end{claim}
\begin{proof}If the continued fraction expansions of $f_{\epsilon}(\mu_2)$ and $\nu_1$ differ
before $v^{(1)}_2$, by Lemma \ref{ss0912} we would have
\begin{equation}|\nu_0-\nu_1|\leq4\cdot72(2B)^4|\nu_2-\nu_1|.\end{equation} This
contradicts (\ref{sxs71}).
\end{proof}

Again, pick a
$$\omega_3=v^{(3)}_1,v^{(3)}_2,\cdots\in\Gamma$$ and set
$$\nu_3=[\underbrace{b_1,\cdots,b_s}_{k},v^{(0)}_1,v^{(0)}_2,\cdots,v^{(0)}_{ns},v^{(1)}_1,\cdots,v^{(1)}_{s_1},v^{(2)}_1,\cdots,v^{(2)}_{ns},v^{(3)}_1,v^{(3)}_2,\cdots]$$
such that $v^{(2)}_{ns+1}\neq v^{(3)}_1$ and
$f^{-1}_{\epsilon}(\nu_3)$ is irrational. Continuing  in this way,
we finally get two infinite continued fractions
$$x=[U^{(0)},U^{(1)},U^{(2)},\cdots]$$and
$$y=[\underbrace{b_1,\cdots,b_s}_{k},V^{(0)},V^{(1)},V^{(2)},\cdots],$$
where $V^{(2l)}$ and $U^{(2l+1)}$ are words of length $ns$, and
$V^{(2l+1)}$ and $U^{(2l)}$ are finite words of elements from
$\{B,B+1\}$. Hence $x$ and $y$ satisfy Conditions (i), (ii) and
(iii).
\end{proof}

Now for each $\epsilon\in(0,\psi]$, choose two infinite continued
fractions $$x=[e_1,e_2,\cdots]$$ and
$$y=[\underbrace{b_1,\cdots,b_s}_{k},f_1,f_2,\cdots]$$ satisfying
Conditions of Lemma \ref{ssxa}. Set {\setlength{\arraycolsep}{0pt}
\begin{eqnarray*}
\xi&=&[h_1,h_2,\cdots]\\
&=&[e_{n},\cdots,e_1,B,\underbrace{b_1,\cdots,b_s}_{k},f_1,\cdots,
f_{n},\\
&&e_{2n},\cdots,e_1,B,\underbrace{b_1,\cdots,b_s}_{k},f_1,\cdots
f_{2n},\\
&&e_{3n},\cdots,e_1,B,\underbrace{b_1,\cdots,b_s}_{k},f_1,\cdots f_{3n},\\
&&\cdots\cdots\cdots].\\
\end{eqnarray*}
}

By Condition (iii) of Lemma \ref{ssxa}, any subblock of
$[\overline{b_1\cdots b_s}]$ of length $> 2ns$ does not occur in
$h_1,h_2,\cdots$ except for $\underbrace{b_1,\cdots,b_s}_{k}$. Let
$I$ be the set of all $i$ satisfying
$h_{i+1},\cdots,h_{i+ks}=\underbrace{b_1,\cdots,b_s}_{k}$. For
each $\phi\in\Phi$, set
$$\xi_{r,\phi}=[h_1,h_2,\cdots,h_r,\phi],$$

$$d_{r}=[h_r,h_{r-1},\cdots,h_1],$$ and
$$D_{r}=[h_{r+1},h_{r+2},\cdots].$$ Then by (\ref{d4ss3}) and Lemma \ref{ssxa}, we
have \begin{equation}\label{sxs1}\lim_{r\in I,\
r\rightarrow\infty}\frac{2|\xi-\xi_{r,\alpha}|}{|\xi_{r,\alpha}-\xi_{r,\alpha}^{\sigma}|}=\epsilon,\end{equation}
hence $c_{\alpha}\xi\leq\epsilon.$ By (\ref{e4}) and
(\ref{d4ss3}), in order to prove $c_{\alpha}\xi=\epsilon,$ we need
to estimate
$\tfrac{2|D_{r}-\phi||\tfrac{1}{d_{r}}+\phi^{\sigma}|}{|\phi-\phi^{\sigma}||D_{r}+\tfrac{1}{d_{r}}|}.$
Assume that $\phi=\alpha_{i}$. Then let $m,l$ be the nonnegative
integers such that
$$h_{r-m+1}=b_{i-m},\cdots,h_{r+1}=b_{i},\cdots,h_{r+l}=b_{i+l-1},$$
and
$$h_{r-m}\neq b_{i-m-1},\  \   h_{r+l+1}\neq b_{i+l}.$$ Now we
divide the estimation into 3 cases
   \begin{enumerate}
\item [(i)] $m+l\leq 3s$

By Lemma \ref{ss091} we have $|D_{r}-\alpha_{i}|\geq
\tfrac{1}{72B^{6s+2}}$ and
$|\tfrac{1}{d_{r}}-\tfrac{1}{\alpha'_{i}}|\geq
\tfrac{1}{72B^{6s+4}}$. If $h_{r+1}\leq B$, we have
$$\frac{2|D_{r}-\alpha_{i}||\tfrac{1}{d_{r}}-\tfrac{1}{\alpha'_{i}}|}{|\alpha_{i}+\tfrac{1}{\alpha'_{i}}||D_{r}+\tfrac{1}{d_{r}}|}\geq\frac{1}{2\cdot72\cdot72B^{12s+8}}>\epsilon.$$
If $h_{r+1}> B$, we have
$$\frac{2|D_{r}-\alpha_{i}|}{|D_{r}+\tfrac{1}{d_{r}}|}\geq\frac{1}{B}.$$
Hence
$$\frac{2|D_{r}-\alpha_{i}||\tfrac{1}{d_{r}}-\tfrac{1}{\alpha'_{i}}|}{|\alpha_{i}+\tfrac{1}{\alpha'_{i}}||D_{r}+\tfrac{1}{d_{r}}|}\geq\frac{1}{2\cdot72B^{6s+6}}>\epsilon.$$
 \item
[(ii)]$3s<m+l\leq 2ns$. In this case let
$$h_{r-m+1},\cdots,h_{r+l}=b_{h},\cdots,b_{s},\underbrace{b_{1},\cdots,b_{s}}_{g},b_{1},\cdots,b_{j}$$
where $1\leq h,j\leq s$. By
 Lemma \ref{ss0913}, we have
  \begin{equation}\label{sxs74v}\tfrac{2|D_{r}-\alpha_{i}||\tfrac{1}{d_{r}}-\tfrac{1}{\alpha'_{i}}|}{|\alpha_{i}+\tfrac{1}{\alpha'_{i}}||D_{r}+\tfrac{1}{d_{r}}|}=\frac{2|D_{r-m+s-h+1}-\alpha||d_{r-m+s-h+1}-\alpha'|}{(\alpha\alpha'+1)(D_{r-m+s-h+1}d_{r-m+s-h+1}+1)}.\end{equation}
By Lemma \ref{ss091}, we have
\begin{equation}\label{sxs72v}|d_{r-m+s-h+1}-\alpha'|\geq\frac{1}{72B^{2s+4}}.\end{equation}
By Lemma \ref{ss0912} and (\ref{ess376}), we have
\begin{equation}\label{sxs73v}|D_{r-m+s-h+1}-\alpha|\geq\frac{2^{ns-3}}{72}|\nu_0-\alpha|\geq\frac{2^{ns-3}}{72}\frac{2\epsilon}{3}=\frac{2^{ns-3}\epsilon}{3\cdot36}.\end{equation}
Now combining  (\ref{sxs74v}), (\ref{sxs72v}) and (\ref{sxs73v})
yields that
$$\tfrac{2|D_{r}-\alpha_{i}||\tfrac{1}{d_{r}}-\tfrac{1}{\alpha'_{i}}|}{|\alpha_{i}+\tfrac{1}{\alpha'_{i}}||D_{r}+\tfrac{1}{d_{r}}|}\geq\frac{2^{ns}\epsilon}{24\cdot36\cdot36B^{2s+8}}\geq\epsilon.$$
 \item [(iii)]$m+l>2ns$. In this case, we have $r-m\in I$ and, by
 Lemma \ref{ss0913}, $$\tfrac{2|D_{r}-\alpha_{i}||\tfrac{1}{d_{r}}-\tfrac{1}{\alpha'_{i}}|}{|\alpha_{i}+\tfrac{1}{\alpha'_{i}}||D_{r}+\tfrac{1}{d_{r}}|}
 =\frac{2|\xi-\xi_{r-m,\alpha}|}{|\xi_{r-m,\alpha}-\xi_{r-m,\alpha}^{\sigma}|}.$$
\end{enumerate}

The estimation in the  case $\phi=\alpha'_{i}$ can be dealt with
in a similar way. Hence we show that
$$\min_{\phi\in\Phi}\liminf_{r\rightarrow\infty}2\tfrac{|\xi-\xi_{r,\phi}|}{|\xi_{r,\phi}-\xi_{r,\phi}^{\sigma}|}=\epsilon.$$
 This finishes the proof of Theorem
\ref{main1}.

 \section{Hurwitz constant of quadratic Lagrange spectrum}
From now on we assume that $v\geq u\geq1$. In this section, we
compute Hurwitz constants of quadratic Lagrange spectrums for real
quadratic numbers $[\overline{u,v}]$, $u\geq9$.

Set $\tau_1=[\overline{u,v}]$ and $\tau_2=[\overline{v,u}]$.
Recall that $\varphi$ is the Golden Ratio
$$(1+\sqrt{5})/2=[1,1,1,\cdots].$$Set
\begin{equation}\theta_{u,v}=\begin{cases} (\tau_1-\varphi^2)(1+\tfrac{\tau_2}{\varphi^2})& \text{if}\ u\geq3 \\(\tau_1-\varphi)(\tfrac{\tau_2}{\varphi}-1)&\text{if}\  u=2 \\(\varphi-\tau_1)(\tfrac{\tau_2}{\varphi}-1) &\text{if}\ u=1 \end{cases}\end{equation}
A direct calculation shows
that\begin{equation}\label{77}\theta_{u,v}<\tfrac{\tau_1\tau_2}{\varphi^2}.\end{equation}

    \begin{lemma}\label{3ss376} If $(u,v)\neq(1,1)$,
$$c_{\tau_1}\varphi=\frac{2\theta_{u,v}}{(\varphi+\tfrac{1}{\varphi}))(\tau_1\tau_2+1)}.$$
   \end{lemma}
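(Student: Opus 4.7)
The plan is to evaluate $c_{\tau_1}\varphi$ directly, classifying all admissible sequences $\beta_n \in \Theta_{\tau_1}$ with $|\beta_n - \beta_n^\sigma| \to 0$. Since $\varphi = [\overline{1}]$ and every $\beta \in \Theta_{\tau_1}$ has continued fraction eventually agreeing with $\overline{u,v}$ or $\overline{v,u}$, the good approximants have the form
\[
\beta = [\underbrace{1,\ldots,1}_r, c_1, \ldots, c_t, \phi], \qquad \phi \in \Phi = \{\tau_1, \tau_2\}, \quad c_1 \geq 2.
\]
Using the convergents $P_k/Q_k = F_{k+2}/F_{k+1}$ of $\varphi$ together with the Fibonacci identity $F_{k+1} - F_k \varphi = (-1)^k \varphi^{-k}$, one obtains closed-form expressions for $\beta - \varphi$ and for $\beta - \beta^\sigma$ via the Möbius representation $\beta = (P_{r+t}\phi + P_{r+t-1})/(Q_{r+t}\phi + Q_{r+t-1})$. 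Letting $r \to \infty$ with the tail data $(c_1, \ldots, c_t, \phi)$ fixed, the ratio $2|\varphi - \beta|/|\beta - \beta^\sigma|$ tends to a finite limit $L(c_1, \ldots, c_t; \phi)$, and $c_{\tau_1}\varphi = \inf L$.

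For the upper bound I exhibit the minimizing family. When $u \geq 3$, set $\beta_r = [\underbrace{1,\ldots,1}_r, 2, \tau_2]$; using the Golden Ratio identities $2 - \varphi = \varphi^{-2}$ and $2\varphi + 1 = \varphi^3$ one computes
\[
L(2; \tau_2) = \frac{2(\tau_1 - \varphi^2)(\tau_2 + \varphi^2)}{\varphi^2 (\varphi + 1/\varphi)(\tau_1\tau_2 + 1)} = \frac{2\theta_{u,v}}{(\varphi + 1/\varphi)(\tau_1\tau_2 + 1)},
\]
matching the statement. When $u \in \{1, 2\}$ no intermediate digit is needed: the simpler family $\beta_r = [\underbrace{1,\ldots,1}_r, \tau_1]$ (i.e.\ $t = 0$) already attains the formula corresponding to the appropriate $\theta_{u,v}$, by specializing the same computation.

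For the lower bound I must show that no other tail data yields a smaller $L$. The argument splits into three ingredients. (i) For $t = 1$, viewing $L(c;\phi)$ as a function of the integer $c \geq 1$: a direct computation shows the minimum is at $c = 2$ precisely when $u \geq 3$ (the regime $\tau_1 > \varphi^2$), and at the trivial $c = 1$ otherwise, because the two factors $|\phi(c - \varphi) + 1|$ and $|\varphi(\tau_1 - c) - 1|$ balance optimally at $c = 2$ exactly in that regime. (ii) Inserting further intermediate digits ($t \geq 2$) strictly increases $L$; this is proved by iterating Lemma \ref{ss0913} and observing that each additional digit contributes a factor $\geq 1$ in the denominator relative to the numerator. (iii) For $u \geq 3$ with $v \geq u$, the choice $\phi = \tau_2$ beats $\phi = \tau_1$, which reduces to the inequality $(\tau_1 + \varphi^2)(\tau_2 - \varphi^2) > (\tau_2 + \varphi^2)(\tau_1 - \varphi^2)$. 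I expect the main obstacle to be ingredient (ii), since a priori some longer carefully chosen word could conceivably yield a smaller limit. The cleanest route is to recast the ratio via the Möbius action of finite words on the doubly-infinite expansion and exploit positivity of the relevant derivatives to show adjoining extra digits monotonically pushes $L$ upward, reducing the infimum to the one- and two-letter cases handled in (i).
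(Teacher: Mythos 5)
Your overall framework --- restricting to approximants $[\underbrace{1,\dots,1}_{r},c_1,\dots,c_t,\phi]$ with $c_1\ge 2$ and $\phi\in\{\tau_1,\tau_2\}$, passing to the limit $r\to\infty$, and minimizing the resulting limit $L$ over the tail data --- is the same as the paper's, and your upper bound is correct: the choice $c_1=2$, $\phi=\tau_2$ gives, via $2-\varphi=\varphi^{-2}$ and $2+\varphi^{-1}=\varphi^{2}$, exactly $\frac{2(\tau_2\varphi^{-2}+1)|\tau_1-\varphi^2|}{(\varphi+\varphi^{-1})(\tau_1\tau_2+1)}$, which matches $\theta_{u,v}$ in all three regimes. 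The lower bound, however, has two gaps. The smaller one is in your ingredient (i): the quadratic form $|(c-\varphi)\tau+1|\,|\tau'-c-\varphi^{-1}|$ in the integer $c$ has \emph{two} zeroes, near $\varphi-\tau^{-1}$ and near $\tau'-\varphi^{-1}$, so besides $c\in\{1,2\}$ you must also evaluate and exclude the candidates $c\in\{u-1,u\}$ (for $\tau=\tau_2$) and $c\in\{v-1,v\}$ (for $\tau=\tau_1$); the paper checks all of these.

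The serious gap is ingredient (ii), which you yourself identify as the main obstacle and for which you offer only a heuristic that does not work as stated. Adjoining digits does not "contribute a factor $\ge1$ in the denominator relative to the numerator": the denominator $(\tau\tau'+1)(\varphi+1/\varphi')$ is independent of $t$, while the two numerator factors $|(p_t-\varphi q_t)\tau+(p_{t-1}-\varphi q_{t-1})|$ and $|(p_{t-1}+\varphi'^{-1}q_{t-1})\tau'-(p_t+\varphi'^{-1}q_t)|$ move in opposite directions as the word grows (the first grows with $q_t$, the second contains a difference $\tau'-[c_t,c_{t-1},\dots]$ that can be small), so there is no termwise monotonicity in word length, and Lemma \ref{ss0913} (an index-shift identity, not a comparison between different words) will not produce one. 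The paper closes this case by a direct uniform estimate rather than monotonicity: since $c_1\ge2$ one has $p_i/q_i>2$ and hence $p_i-\varphi q_i\ge 2-\varphi=\varphi^{-2}$ for $i=t,t-1,t-2$; after normalizing $c_t\neq[\tau']$ and splitting on whether $c_t>[\tau']$ or $c_t<[\tau']$, the product of the two numerator factors is bounded below by $\tau\tau'/\varphi^2$, and the elementary inequality (\ref{77}), $\theta_{u,v}<\tau_1\tau_2/\varphi^2$, then shows every word of length $t\ge2$ exceeds the target value. Without this (or a genuine substitute), your argument reduces the infimum to the short-word cases only by assertion, so the proof is incomplete at precisely the step that carries the content of the lemma.
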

\begin{proof}
Set $ \varphi'=[\underbrace{1,\cdots,1}_{r}],$ and let
$\tau=\tau_1$ or $\tau_2$.  If we approximate $\varphi$ by
\begin{equation}\label{sxs1cv}\alpha=[\underbrace{1,\cdots,1}_{r},a_1,\cdots,a_n,\tau],\end{equation}
where $a_1\neq1$, we need to estimate
$\frac{2|\varphi-\alpha|}{|\alpha-\alpha^{\sigma}|}.$  Let
$\{\frac{p_{n}}{q_{n}}\}_{n\geq1}$ be the sequence of convergents
of $[a_1,\cdots,a_n]$. Applying the deduction of equation
(\ref{d4ss3}) in \S 2 shows that
$$\frac{2|\varphi-\alpha|}{|\alpha-\alpha^{\sigma}|}=
\frac{2|\tfrac{p_{n}\tau+p_{n-1}}{q_{n}\tau+q_{n-1}}-\varphi||\tfrac{p_{n}\tau^{\sigma}+p_{n-1}}{q_{n}\tau^{\sigma}+q_{n-1}}+\tfrac{1}{\varphi'}|}
{|\tfrac{p_{n}\tau+p_{n-1}}{q_{n}\tau+q_{n-1}}-\tfrac{p_{n}\tau^{\sigma}+p_{n-1}}{q_{n}\tau^{\sigma}+q_{n-1}}|(\varphi+\tfrac{1}{\varphi'})},$$
where we can assume that $\varphi'$ is arbitrarily closed to
$\varphi$ if necessary. Let $\tau'=-\tfrac{1}{\tau^{\sigma}}$.
Then the right hand side of the above equality simplifies to $$
\frac{2|(p_{n}-\varphi q_{n})\tau+(p_{n-1}-\varphi
q_{n-1})||(p_{n-1}+\varphi'^{-1}
q_{n-1})\tau'-(p_{n}+\varphi'^{-1} q_{n})|}
{(\tau\tau'+1)(\varphi+\tfrac{1}{\varphi'})}.$$ Now we follow the
arguments in \cite{pe} to treat the case $n=1$ (the case $n=0$ can
be reduced to the case $n=1$ by setting $a_1=[\tau]$ and replacing
$\tau$ with $\tau'$). In this case, $q_{n}=p_{n-1}=1$ and
$q_{n-1}=0$. We need to estimate
$$Q= \frac{2|(p_{1}-\varphi )\tau+1||\tau'-p_{1}-\varphi'^{-1} |}
{(\tau\tau'+1)(\varphi+\tfrac{1}{\varphi'})},$$ which is an
absolute value of a quadratic form in $p_{1}\in \mathbb{Z}$. The
minimal value can only be attained for integers closest to the
zeroes of the quadratic form, which are $\varphi-\tau^{-1}$ and
$\tau'-\varphi'^{-1}$. Hence the possible minimal integer points
are $$\begin{cases} 1,2,v-1,v & \text{if}\ u\geq2,  \tau=\tau_1
\\
1,2,u-1,u & \text{if}\ u\geq2, \tau=\tau_2
\\0, 1,v,v+1 &\text{if}\ u=1, \tau=\tau_1
\\0,1,2 &\text{if}\ u=1,
\tau=\tau_2
\end{cases}$$
Evaluating $Q$ on these integers we get
$$\liminf_{|\alpha-\alpha^{\sigma}|\rightarrow0}\frac{2|\varphi-\alpha|}{|\alpha-\alpha^{\sigma}|}=\frac{2\theta_{u,v}}{(\varphi+\tfrac{1}{\varphi})(\tau_1\tau_2+1)},$$
where we require $n=1$ and $\tau\in\{\tau_1,\tau_2\}$ in
(\ref{sxs1cv}).

Now we treat the case $n\geq2$ and $a_n\neq [\tau']$. In this
case, as $a_1\neq1$, we have $p_{i}\geq 2q_{i},$ and hence
\begin{equation}\label{sxs1cvce}p_{i}-\varphi q_{i}\geq p_{i}(1-\varphi q_{i}/p_{i})\geq 2-\varphi,\end{equation} for $i=n,n-1$.
(\ref{sxs1cvce}) still holds for $i=n-2$ because when $n=2$,
$p_{n-2}-\varphi q_{n-2}=1$. Thus it suffices to show that
$$\frac{2\tau(p_{n}-\varphi q_{n})|(p_{n-1}+\varphi'^{-1}
q_{n-1})\tau'-(p_{n}+\varphi'^{-1} q_{n})|}
{(\tau\tau'+1)(\varphi+\tfrac{1}{\varphi'})}\geq
\frac{2\theta_{u,v}}{(\varphi+\tfrac{1}{\varphi})(\tau_1\tau_2+1)},$$
or
$$\tau(p_{n}-\varphi
q_{n})|(p_{n-1}+\varphi'^{-1} q_{n-1})\tau'-(p_{n}+\varphi'^{-1}
q_{n})|\geq
\frac{(\varphi+\tfrac{1}{\varphi'})}{(\varphi+\tfrac{1}{\varphi})}\theta_{u,v}.$$

By (\ref{77}), when $\varphi'$ is sufficiently closed to
$\varphi$, we have
$$\frac{(\varphi+\tfrac{1}{\varphi'})}{(\varphi+\tfrac{1}{\varphi})}\theta_{u,v}
<\frac{\tau_1\tau_2}{\varphi^2}.$$

Hence it suffices to show that

\begin{equation}\label{3255}\frac{(p_{n}-\varphi
q_{n})|(p_{n-1}+\varphi'^{-1}
q_{n-1})(\tau'-a_n)-(p_{n-2}+\varphi'^{-1}
q_{n-2})|}{\tau'}\geq\frac{1}{\varphi^2}.\end{equation}

When $a_n> [\tau']$, the left hand side of (\ref{3255}) is larger
than
$$\frac{a_n(p_{n-1}-\varphi
q_{n-1})(p_{n-2}+\varphi'^{-1}
q_{n-2})}{\tau'}\geq(2-\varphi)=\frac{1}{\varphi^2}.$$

When $a_n< [\tau']$, the left hand side of (\ref{3255}) is
{\setlength{\arraycolsep}{0pt}
\begin{eqnarray*}\label{for90}
&& \frac{(p_{n}-\varphi q_{n})((p_{n-1}+\varphi'^{-1}
q_{n-1})(\tau'-a_n)-(p_{n-2}+\varphi'^{-1}
q_{n-2}))}{\tau'}\\
&\geq&\frac{(p_{n}-\varphi q_{n})((p_{n-1}+\varphi'^{-1}
q_{n-1})-(p_{n-2}+\varphi'^{-1}
q_{n-2}))}{a_n+1}\\
&\geq&\frac{(a_n(p_{n-1}-\varphi q_{n-1})+(p_{n-2}-\varphi q_{n-2}))}{a_n+1}\\
&\geq&(2-\varphi)=\frac{1}{\varphi^2}.
\end{eqnarray*}
} This finishes the proof of the lemma.
\end{proof}

For an irrational real number $$\xi=[a_0,a_1,a_2,\cdots],$$ set
$$N_{\xi}=\liminf_{r\rightarrow\infty} a_r,$$ and set $$\vartheta_{u,v}=\begin{cases}
1& \text{if}\ u\geq2  \\
 \\ \tau_1-1 &\text{if}\ u=1.
 \end{cases}$$
    \begin{lemma}\label{3sfs376} For any $v\geq u\geq1$, we have
$$\lim_{N_{\xi}\rightarrow\infty}
c_{\tau_1}\xi=\frac{2\vartheta_{u,v}}{(\tau_1\tau_2+1)}.$$ In
particular, if $N_{\xi}= \infty$, we have
$$c_{\tau_1}\xi=\frac{2\vartheta_{u,v}}{(\tau_1\tau_2+1)}.$$
   \end{lemma}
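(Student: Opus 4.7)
The plan is to extend the computation from Lemma \ref{3ss376} to the setting where $\xi$ itself has large partial quotients. Any $\beta \in \Theta_{\tau_1}$ agreeing with $\xi$ for its first $r+1$ partial quotients has the form $\beta = [a_0, a_1, \ldots, a_r, \gamma]$ where $\gamma = [c_1, \ldots, c_n, \tau]$ for some $\tau \in \{\tau_1, \tau_2\}$ and positive integers $c_j$. Repeating the derivation that produced (\ref{d4ss3}) with $\gamma$ in place of $\phi$ yields
$$\frac{2|\xi - \beta|}{|\beta - \beta^\sigma|} = \frac{2|D_r - \gamma|\,|\gamma^\sigma + 1/d_r|}{|\gamma - \gamma^\sigma|\,(D_r + 1/d_r)}.$$
When $N_\xi$ is large, all but finitely many $a_i$ exceed any given threshold, so $D_r, d_r \to \infty$ with $r$; for each fixed $\gamma$ the right-hand side then tends to $2|\gamma^\sigma|/|\gamma - \gamma^\sigma|$ at a rate controlled by $1/N_\xi$.

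Writing $\gamma = (A\tau + A')/(B\tau + B')$ with $A/B = [c_1, \ldots, c_n]$ and $A'/B' = [c_1, \ldots, c_{n-1}]$, and substituting $\tau_1^\sigma = -1/\tau_2$, I compute
$$\frac{2|\gamma^\sigma|}{|\gamma - \gamma^\sigma|} = \frac{2|A'\tau_2 - A|(B\tau_1 + B')}{\tau_1\tau_2 + 1}$$
when $\tau = \tau_1$, with the analogous formula upon interchanging $\tau_1 \leftrightarrow \tau_2$ when $\tau = \tau_2$. The lemma therefore reduces to showing that the infimum of $|A'\tau_2 - A|(B\tau_1 + B')$ over all admissible $(c_1, \ldots, c_n)$, taking both choices $\tau = \tau_1, \tau_2$, equals $\vartheta_{u,v}$, and that it is attained.

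The key simplification is the identity $[v, \tau_1] = \tau_2$ (equivalently $\tau_2 = v + 1/\tau_1$) and its symmetric partner $[u, \tau_2] = \tau_1$: any $\gamma$ ending in $c_n = v$ with $\tau = \tau_1$ coincides as a quadratic irrational with the depth-$(n-1)$ parametrization using $\tau = \tau_2$, and vice versa. Iterating, one reduces to canonical forms with $c_n \neq v$ when $\tau = \tau_1$ and $c_n \neq u$ when $\tau = \tau_2$. In the cases $n = 0$ and $n = 1$, direct calculation gives products $1$ (for $\gamma \in \{\tau_1, \tau_2\}$), $\tau_1 - 1$ (for $\gamma = [v+1, \tau_1]$), and $\tau_2 - 1$ (for $\gamma = [u+1, \tau_2]$); the minimum of these is $\min\{1, \tau_1 - 1, \tau_2 - 1\}$, which equals $\vartheta_{u,v}$ in each regime, since $\tau_1 - 1 < 1 \leq \tau_2 - 1$ holds precisely when $u = 1$.

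For $n \geq 2$ (with $c_n \neq v$ in the $\tau = \tau_1$ case, say), expanding $A = c_n A' + A''$ and $B = c_n B' + B''$ and splitting on $c_n \geq v+1$ versus $c_n \leq v-1$ shows in each subcase, by direct estimation, that $|A'\tau_2 - A|(B\tau_1 + B') \geq \vartheta_{u,v}$; indeed the product grows rapidly with $n$. Choosing $\beta$ to correspond to the optimal $\gamma$ certifies the matching upper bound, so $c_{\tau_1}\xi \to 2\vartheta_{u,v}/(\tau_1\tau_2+1)$ as $N_\xi \to \infty$, and the exact equality when $N_\xi = \infty$ follows from the same computation with the error terms vanishing. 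The main obstacle is the systematic case analysis for $n \geq 2$: while each individual subcase is an elementary inequality, aggregating them uniformly across the two choices of $\tau$ and the full parameter regime $v \geq u \geq 1$ is the delicate step.
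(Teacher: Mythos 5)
There is a genuine gap at the point where you pass from the dynamic quantity to the static one. Your identity
$$\frac{2|\xi-\beta|}{|\beta-\beta^{\sigma}|}=\frac{2|D_{r}-\gamma|\,|\gamma^{\sigma}+1/d_{r}|}{|\gamma-\gamma^{\sigma}|\,(D_{r}+1/d_{r})}$$
does tend to $2|\gamma^{\sigma}|/|\gamma-\gamma^{\sigma}|$ for each \emph{fixed} $\gamma$, but the liminf defining $c_{\tau_1}\xi$ ranges over approximants $\beta$ whose word $(c_1,\ldots,c_n)$ may vary with $r$. The dangerous competitors are exactly those with $c_1$ comparable to $a_{r+1}$ (which tends to infinity with $N_{\xi}$), e.g.\ $c_1=a_{r+1}-1$: for these $|D_{r}-\gamma|$ stays bounded, of size about $1-1/\tau$, while $D_{r}\to\infty$, so the factor $|D_{r}-\gamma|/(D_{r}+1/d_{r})$ tends to $0$ rather than $1$, and your static functional assigns such a $\gamma$ a value of order $c_1$, wildly overestimating the true limit, which for $n=1$, $\tau=\tau_1$ is $2(\tau_1-1)/(\tau_1\tau_2+1)$. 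That this number happens to coincide with the value of your fixed candidate $[v+1,\tau_1]$ is an accident of the $n=1$ computation; for $n\ge 2$ with $c_1\approx a_{r+1}$ your static minimization says nothing at all. This regime is precisely what the paper's argument is built to control: it never discards $D_{r}$, proving $|(p_{n}-D_{r}q_{n})\tau+(p_{n-1}-D_{r}q_{n-1})|\ge\tau|a_1-D_{r}|$ and then bounding $2a_1|a_1-D_{r}|\min(1,\tau-1)/(D_{r}(\tau\tau'+1))$ from below, so that $a_1$ and $D_{r}$ remain coupled. Without an analogue of this, the lower bound $c_{\tau_1}\xi\ge 2\vartheta_{u,v}/(\tau_1\tau_2+1)$ is not established; in effect you have interchanged $\liminf_{r}$ with $\inf_{\gamma}$ in the unfavorable direction.

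Two further points. Even within your static problem, the $n\ge2$ case is only asserted, and the claim that the product $|A'\tau_2-A|(B\tau_1+B')$ ``grows rapidly with $n$'' is false: taking $(c_n,\ldots,c_1)$ to be an initial block of the expansion of $\tau_2$ makes $|A'\tau_2-A|$ of size $1/A'$ and keeps the product bounded, so one must genuinely prove the lower bound $\ge\vartheta_{u,v}$ case by case (this is the content of the paper's Claim \ref{5645vv} and of its separate treatment of $n=2$, $a_2=1$, $a_1<b_{r+1}$). Also, to justify any interchange you would need the convergence ``at a rate $1/N_{\xi}$'' to be uniform in $\gamma$, which it is not. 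The upper-bound half of your argument (exhibiting a fixed optimal $\gamma$) is fine.
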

\begin{proof}Without loss of generality, we can assume that $$\xi=[b_0,b_1,b_2,\cdots]$$
is a positive number such that $N_{\xi}$ is very large. Set
$$d_{r}:=[b_r,b_{r-1},\cdots,b_0]$$ and
$$D_{r}:=[b_{r+1},b_{r+2},\cdots].$$ If we approximate $\xi$ by \begin{equation}\label{s}\alpha=[b_0,b_1,\cdots,b_r,a_1,\cdots,a_n,\tau],\end{equation}
where $a_1\neq b_{r+1}$ and $\tau\in\{\tau_1,\tau_2\}$, we need to
estimate
\begin{equation}\label{stas} \tfrac{2|(p_{n}-D_{r}
q_{n})\tau+(p_{n-1}-D_{r} q_{n-1})||(p_{n-1}+d_{r}^{-1}
q_{n-1})\tau'-(p_{n}+d_{r}^{-1} q_{n})|}
{(\tau\tau'+1)(D_{r}+\tfrac{1}{d_{r}})}\end{equation} or
\begin{equation}\label{st} \tfrac{2|(p_{n}-D_{r}
q_{n})\tau+(p_{n-1}-D_{r}
q_{n-1})||(p_{n-1}(\tau'-a_n)-p_{n-2})+d_{r}^{-1}(q_{n-1}(\tau'-a_n)-q_{n-2})|}
{(\tau\tau'+1)(D_{r}+\tfrac{1}{d_{r}})}\end{equation} where
$\{\frac{p_{i}}{q_{i}}\}_{i\geq1}$ is the sequence of convergents
of $[a_1,\cdots,a_n]$. The treatment in the case $n=1$ proceeds
exactly as above and implies the lower limit is
$\frac{2\vartheta_{u,v}}{(\tau_1\tau_2+1)}$ when
$N_{\xi}\rightarrow\infty$.

\bigskip

Now we treat the case $n\geq2$ and $a_n\neq [\tau']$. When $a_n>
[\tau']$, $$|p_{n-1}(\tau'-a_n)-p_{n-2}|\geq|
q_{n-1}(\tau'-a_n)-q_{n-2}|,$$ hence
\begin{equation}\label{st1cd} \frac{|p_{n-1}(\tau'-a_n)-p_{n-2}|}{|
q_{n-1}(\tau'-a_n)-q_{n-2}|}\geq
\frac{1}{\tau\tau'}.\end{equation} When $a_n< [\tau']$,
$$|p_{n-1}(\tau'-a_n)-p_{n-2}|\geq \frac{p_{n-1}}{\tau},$$ and
$$|
q_{n-1}(\tau'-a_n)-q_{n-2}|\leq p_{n-1}\tau',$$ hence
(\ref{st1cd}) still holds.

 As we are only concerned with the
the lower limit when $N_{\xi}\rightarrow\infty$, by (\ref{st1cd}),
we can replace (\ref{st}) with

\begin{equation}\label{st1} \frac{2|(p_{n}-D_{r}
q_{n})\tau+(p_{n-1}-D_{r} q_{n-1})||p_{n-1}(\tau'-a_n)-p_{n-2}|}
{D_{r}(\tau\tau'+1)}.\end{equation}

 \begin{claim}\label{5645vv}
$$|p_{n-1}(\tau'-a_n)-p_{n-2}|\geq \frac{p_{n-1}}{\tau} min(1,\tau-1).$$
\end{claim}
\begin{proof}
When $a_n> [\tau']$, the left hand side is
$$p_{n-1}(a_n-\tau')+p_{n-2}> p_{n-1}(1-\tfrac{1}{\tau}) .$$

When $a_n< [\tau']$, the left hand side is
$$p_{n-1}(\tau'-a_n)-p_{n-2}\geq p_{n-1}(1+\tfrac{1}{\tau})
-p_{n-2}\geq \frac{p_{n-1}}{\tau}.$$
\end{proof}

It remains to estimate $|(p_{n}-D_{r} q_{n})\tau+(p_{n-1}-D_{r}
q_{n-1})|.$ We note that as $a_1\neq b_{r+1}$, $(p_{n}-D_{r}
q_{n})\tau$ and $p_{n-1}-D_{r} q_{n-1}$ can not have opposite
signs.

 \begin{claim}\label{5645}
If $n=2$ and $a_2\neq1$, or $n=2$ and $a_1>b_{r+1}$, or $n>2$, we
have
$$|(p_{n}-D_{r} q_{n})\tau+(p_{n-1}-D_{r} q_{n-1})|\geq
\tau|a_1-D_{r}|,$$ when $N_{\xi}\rightarrow\infty$.
\end{claim}
\begin{proof}When $n=2$ and $a_2\neq1$, or $n=2$ and $a_1>b_{r+1}$, the left hand side simplifies to
\begin{equation}\label{st7}
|(a_1-D_{r})(a_2\tau+1)+\tau|\geq \tau|a_1-D_{r}|.\end{equation}
Now we consider the case $n>2$. If $a_1-b_{r+1}\neq-1$, we have
 {\setlength{\arraycolsep}{0pt}
\begin{eqnarray*}\label{for90}
&& |(p_{n}-D_{r}
q_{n})\tau+(p_{n-1}-D_{r} q_{n-1})|\\
&\geq&|\tau q_{n}(\tfrac{p_{n}}{q_{n}}-D_{r} )|
\\
&\geq&|2\tau(\tfrac{p_{n}}{q_{n}}-D_{r} )|\\
&\geq&\tau|a_1-D_{r}|.
\end{eqnarray*}
}

If $a_1-b_{r+1}=-1$, direct computation shows that $$|D_{r}-
\tfrac{p_{n}}{q_{n}}|=a_1+1+\tfrac{1}{D_{r+1}}-a_1-\frac{1}{a_2+\frac{1}{a_3+\cdots}}\geq
\tfrac{1}{a_{3}+2}$$ and $q_{n}\geq a_{1}a_{3} \geq 2(a_{3}+2)$
when $N_{\xi}\rightarrow\infty$. Hence, we have
 {\setlength{\arraycolsep}{0pt}
\begin{eqnarray*}
&& |(p_{n}-D_{r}
q_{n})\tau+(p_{n-1}-D_{r} q_{n-1})|\\
&\geq&|\tau q_{n}(\tfrac{p_{n}}{q_{n}}-D_{r} )|
\\
&\geq&\frac{\tau q_{n}}{a_{3}+2},
\\
&\geq&\tau|a_1-D_{r}|.
\end{eqnarray*}
}
\end{proof}

We note that $p_{n-1}\geq a_1$. Now if $n=2$ and $a_2\neq1$, or
$n=2$ and $a_1>b_{r+1}$, or $n>2$, combining Claim \ref{5645vv}
and Claim \ref{5645} implies that
 {\setlength{\arraycolsep}{0pt}
\begin{eqnarray*}
&& \frac{2|(p_{n}-D_{r} q_{n})\tau+(p_{n-1}-D_{r}
q_{n-1})||p_{n-1}(\tau'-a_n)-p_{n-2}|}
{D_{r}(\tau\tau'+1)}\\
&\geq&\frac{ 2a_1|a_1-D_{r}|min(1,\tau-1)} {D_{r}(\tau\tau'+1)}
\\
&\geq&\frac{2\vartheta_{u,v}}{(\tau_1\tau_2+1)},
\end{eqnarray*}
}when $N_{\xi}\rightarrow\infty$.

\bigskip

If $n=2$, $a_2=1$ and $a_1<b_{r+1}$, (\ref{st1}) simplifies to
 {\setlength{\arraycolsep}{0pt}
\begin{eqnarray*}
&& \frac{2|(a_1-D_{r})(\tau+1)+\tau||a_{1}(\tau'-1)-1|}
{D_{r}(\tau\tau'+1)}\\
&\geq&\frac{
2(\tau(D_{r}-a_1-1)+D_{r}-a_1)(\tfrac{a_{1}}{\tau}+a_{1}-1)}
{D_{r}(\tau\tau'+1)}
\\
&\geq&\frac{2\vartheta_{u,v}}{(\tau_1\tau_2+1)},
\end{eqnarray*}
}when $N_{\xi}\rightarrow\infty$. This completes the
proof.\end{proof}

Comparing Lemma \ref{3ss376} with Lemma \ref{3sfs376}, we have
$$\lim_{N_{\xi}\rightarrow\infty}c_{\tau_1}\xi <c_{\tau_1}\varphi$$ when $u\geq4$.

\bigskip

We are now in the position to determine the Hurwitz constant of
quadratic Lagrange spectrum for real quadratic number
$[\overline{u,v}]$, $u\geq9$.

\begin{proof}[Proof of Theorem \ref{main3}]
Let $$\xi=[b_0,b_1,\cdots]$$ be an irrational real number not in
$\Theta_{\varphi}\bigcup\Theta_{[\overline{u,v}]}$ and let $d_{r}$
and $D_{r}$ be as before. If we approximate $\xi$  by
\begin{equation}\label{sxscx}\xi'=[b_0,b_1,b_2,\cdots,b_r,k,\tau],\end{equation} we need to verify

\begin{equation}\label{st161} \frac{|(k-D_{r})\tau+1 ||\tau'-k-\tfrac{1}{d_{r}}
|} {D_{r}+\tfrac{1}{d_{r}}}\leq
\frac{(\tau_1-\varphi^2)(1+\tfrac{\tau_2}{\varphi^2})}{\varphi+\tfrac{1}{\varphi}}.\end{equation}
The proof is divided into 4 cases
\bigskip

   \begin{enumerate}
\item  There exist infinitely many $r$ such that $b_r, b_{r+1}>1$.

\bigskip
For such $r$, set $k=[\tau']$. Then the left hand side of
(\ref{st161}) is
$$\frac{|\tau'-D_{r} ||\tau-d_{r} |} {D_{r}d_{r}+1}.$$
which is invariant under the  interchange $(\tau',
D_{r})\leftrightarrow(\tau, d_{r})$. Hence  we can assume without
loss of generality that $d_r\geq D_{r}$. Set $\tau=\tau_2$.

 \begin{claim}\label{5645vvd}
  There exist infinitely many $r$ such that $b_r, b_{r+1}>1$, and
\begin{equation}\label{st164}\frac{|\tau_1-D_{r} |}
{D_{r}d_{r}+1}\leq
\frac{\tau_1-\varphi^2}{\varphi^2(\varphi+\tfrac{1}{\varphi})}.\end{equation}

\end{claim}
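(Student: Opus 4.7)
Set $f(D,d) = |\tau_1 - D|/(Dd+1)$ and $C = (\tau_1-\varphi^2)/(\varphi^2(\varphi+1/\varphi))$, so that the claimed inequality reads $f(D_r,d_r) \le C$. The plan is to locate, among the infinitely many candidate indices $r$ with $b_r,b_{r+1}\ge 2$ provided by the case hypothesis, an infinite subfamily on which this bound holds, via a dichotomy on the surrounding continued-fraction digits. Throughout I use that $f$ is strictly decreasing in each argument on the region $\{0<D<\tau_1,\,d>0\}$, and that $D\ge \tau_1$ trivially forces $f\le 1/d\le 1/2<C$.

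As a preparatory step I would verify the three algebraic inequalities $f(3,2)<C$, $f(2,3)<C$, and $f(7/3,\,7/3)<C$ under the hypothesis $\tau_1\ge 9$. Each, after cross-multiplying and using $\varphi+1/\varphi=\sqrt 5$, reduces to a linear inequality in $\tau_1$ that is comfortably satisfied whenever $\tau_1 > 9$; for example $f(7/3,7/3)=(9\tau_1-21)/58$ and one checks $9/58 < 1/(\varphi^2\sqrt 5)$.

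I would then split the candidate indices into two cases. \emph{Case A:} infinitely many candidates $r$ satisfy $\max(b_r,b_{r+1})\ge 3$. At each such $r$, either $D_r > b_{r+1}\ge 3$ or $d_r > b_r\ge 3$, with the other variable $\ge 2$, so monotonicity of $f$ together with the bounds $f(3,2),f(2,3)<C$ immediately yields $f(D_r,d_r)\le C$. \emph{Case B:} for all but finitely many candidates, $b_r=b_{r+1}=2$. For such $r$ sufficiently large, one cannot also have $b_{r+2}\ge 3$, for then $r+1$ would itself be a candidate with $\max(b_{r+1},b_{r+2})\ge 3$, violating the Case B assumption. Hence $b_{r+2}\in\{1,2\}$, which gives $D_{r+1}=b_{r+2}+1/D_{r+2}<3$ and therefore $D_r=2+1/D_{r+1}>7/3$; the symmetric argument applied at $r-1$ yields $d_r>7/3$. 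Monotonicity of $f$ together with $f(7/3,7/3)<C$ then delivers $f(D_r,d_r)\le C$ at every such candidate.

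The main obstacle is the Case B propagation step: one must invoke the Case B assumption at the shifted indices $r\pm 1$, for which the crucial point is that any $b_{r+2}\ge 3$ paired with $b_{r+1}=2$ automatically makes $r+1$ a candidate. The numerical threshold $7/3$ is precisely what the monotonicity of $f$ requires, so the single bound $D_{r+1}<3$ arising from $b_{r+2}\le 2$ is exactly enough, and no deeper structural analysis of the digit sequence is needed.
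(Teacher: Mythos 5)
Your proof is correct and follows essentially the same route as the paper: a case analysis on the partial quotients $b_{r-1},b_r,b_{r+1},b_{r+2}$ near the candidate index, combined with monotonicity of $\frac{|\tau_1-D|}{Dd+1}$ and a numerical check of the bound at a few extreme values of $(D,d)$, valid since $\tau_1>9$. Your Case B is a mild streamlining of the paper's argument, in that the single lower bound $D_r,d_r>7/3$ (obtained by propagating the ``all neighbouring digits are $\le 2$'' information one step outward) subsumes both of the paper's subcases (the pattern $1,2,2,\le 2$ and the eventually-all-$2$'s tail with $D_r,d_r\to 1+\sqrt2$), and you avoid relying on the normalization $d_r\ge D_r$.
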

\begin{proof}
If $b_r\geq3$, it is easy to check that (\ref{st164})holds.

If  $b_r\geq 3$ does not occur infinitely, then, as $d_r\geq
D_{r}$, either the case $b_{r-1}+1=b_r=b_{r+1}=2\geq b_{r+2}$
occurs infinitely or $b_r=2$ for sufficiently large $r$. We have
in the first case $d_r>2+\tfrac{1}{2}$, $D_{r}\geq2+\tfrac{1}{3}$
in the second case $D_{r}, d_{r}\rightarrow1+\sqrt{2}$. Since
$\tau_1>9$, in both the cases we can verify that  (\ref{st164})
still holds.
\end{proof}

Now choose an $r$ of Claim \ref{5645vvd}. If $\tau_2\leq2d_{r} $,
$\frac{|\tau_2-d_{r} |} {d_{r}}\leq1.$ Since $\tau_2>9$, in this
case we have
$$\frac{|\tau_1-D_{r} ||\tau_2-d_{r} |} {D_{r}d_{r}+1}\leq\frac{|\tau_1-D_{r} |} {D_{r}}\leq\frac{(\tau_1-\varphi^2)(1+\tfrac{\tau_2}{\varphi^2})}{\varphi+\tfrac{1}{\varphi}}.$$

If $\tau_2>2d_{r} $,  by (\ref{st164}), we have,
 {\setlength{\arraycolsep}{0pt}
\begin{eqnarray*}\label{for90}
&& \frac{|\tau_1-D_{r} ||\tau_2-d_{r} |} {D_{r}d_{r}+1}\\
&\leq&\frac{\tau_2(\tau_1-\varphi^2)}{\varphi^2(\varphi+\tfrac{1}{\varphi})}
\\
&\leq&\frac{(\tau_1-\varphi^2)(1+\tfrac{\tau_2}{\varphi^2})}{\varphi+\tfrac{1}{\varphi}}.
\end{eqnarray*}
} This settles the case (1).

\bigskip

If the case (1) is excluded, then the tail of $[b_0,b_1,\cdots]$
has the form
$$b_{c_1},1,\cdots,1,b_{c_2},1,\cdots,1,b_{c_3},1,\cdots,1,b_{c_4},1,\cdots$$
where $b_{c_i}>1$. Set $M=\limsup_{r\rightarrow\infty} b_{c_r}$.
\bigskip
 \item
$M\geq v$.
\bigskip

For any $b_{c_r}\geq v$,  set $k=1$ and $\tau=\tau_2$ , and
replace $r$ with $c_r-1$.

 Then the
left hand side of  (\ref{st161}) is
$$\frac{|1-\tfrac{\tau_2}{D_{c_r}}|(\tau_1-1-\tfrac{1}{d_{c_r-1}})}
{D_{c_r-1}+\tfrac{1}{d_{c_r-1}}}\leq\tau_1-1
\leq\frac{(\tau_1-\varphi^2)(1+\tfrac{\tau_2}{\varphi^2})}{\varphi+\tfrac{1}{\varphi}}.
$$

 \item
$M< u-1$.
\bigskip
 For any $b_{c_r}=M$, set $k=M+1$ and
$\tau=\tau_2$, and replace $r$ with $c_r-1$. Then the left hand
side of (\ref{st161}) is
 {\setlength{\arraycolsep}{0pt}
\begin{eqnarray}\label{for90s}
&&\frac{((\tfrac{1}{1+D_{c_r+1}})\tau_2+1)(\tau_1-M-1-\tfrac{1}{d_{c_r-1}})}
{M+\tfrac{D_{c_r+1}}{1+D_{c_r+1}}+\tfrac{1}{d_{c_r-1}}}\\
&\leq&\frac{(\tfrac{1}{2}\tau_2+1)(\tau_1-\varphi^2)}
{M+\tfrac{1}{2}+\tfrac{1}{2}} \nonumber \\
&\leq&\frac{(\tau_1-\varphi^2)(1+\tfrac{\tau_2}{\varphi^2})}{\varphi+\tfrac{1}{\varphi}}.\nonumber
\end{eqnarray}
}

 \item
$v>M\geq u-1$.
\bigskip

As $M\geq u-1>8$ and  $\tau_1>9$, interchanging $\tau_1$ and
$\tau_2$ in (\ref{for90s}), we get
 {\setlength{\arraycolsep}{0pt}
\begin{eqnarray*}\label{for90}
&&\frac{((\tfrac{1}{1+D_{c_r+1}})\tau_1+1)(\tau_2-M-1-\tfrac{1}{d_{c_r-1}})}
{M+\tfrac{D_{c_r+1}}{1+D_{c_r+1}}+\tfrac{1}{d_{c_r-1}}}\\
&\leq&\frac{(\tfrac{1}{2}\tau_1+1)(\tau_2+\varphi^2)}
{M+\tfrac{1}{2}+\tfrac{1}{2}}\\
&\leq&\frac{(\tau_1-\varphi^2)(1+\tfrac{\tau_2}{\varphi^2})}{\varphi+\tfrac{1}{\varphi}}.
\end{eqnarray*}
}
\end{enumerate}
\end{proof}


\begin{thebibliography}{99}

\bibitem{bu}Y. Bugeaud, \textsl{On the quadratic Lagrange spectrum}, Math. Z. 276(3-4), 985-999 (2014).


\bibitem{c} T. W. Cusick, \textsl{The connection between the Lagrange and Markov
spectra}, Duke Math. J. 42 (1975), 507-517.

\bibitem{cf} T. W. Cusick and M. E. Flahive, \textsl{The Markoff and Lagrange spectra}, Mathematicas Surveys and Monographs, Vol. 30 (1989).


\bibitem{h} M. Hall, \textsl{On the sum and product of continued fractions}, Annals of
Math. 48 (4) (1947).

\bibitem{hw} G. Hardy and E. Wright, \textsl{An introduction to the theory of
numbers}, Oxford Univ. Press, London, 1979.


\bibitem{m} A. Markoff, \textsl{Sur les formes quadratiques binaires ind\'{e}finies},
Math. Ann. 15 (1879) 381-409.


\bibitem{m1} A. Markoff, \textsl{Sur les formes quadratiques binaires ind\'{e}finies},
Math. Ann. 17 (1880) 379-399.

\bibitem{pe} T. Pejkovi\'{c}, \textsl{Quadratic Lagrange spectrum}, Math. Z.
283(3-4), 861-869(2016).

\bibitem{pp} J. Parkkonen, F. Paulin, \textsl{Spiraling spectra of geodesic lines in negatively curved
manifolds}, Math. Z. 268(1-2), 101-142 (2011)


\bibitem{pp1} J. Parkkonen, F. Paulin, \textsl{Erratum to: Spiraling spectra of
geodesic lines in negatively curved manifolds}, Math. Z. 276(3-4),
1215-1216 (2014)


\end{thebibliography}
\end{document}